\newcommand{\calM}{\mathcal{M}}
\newcommand{\FM}{\mathrm{FM}}
\newcommand{\Exp}{\mathbb{E}}
\newcommand{\AP}{\mathrm{AP}}
\newcommand{\prfnoi}{\smallskip\noindent}
\newcommand{\suchthat}{\, |\,}
\newcommand{\bsuchthat}{\, \big|\,}
\newcommand{\dps}{\displaystyle}
\newcommand{\Pot}{\mathcal{P}}
\newcommand{\fin}{\mathrm{fin}}
\newcommand{\bnorm}[2][\relax]{
   \ifx#1\relax \ensuremath{\bigl\Vert#2\bigr\Vert}
   \else \ensuremath{\bigl\Vert#2\bigr\Vert_{#1}}
   \fi}
\newcommand{\Bnorm}[2][\relax]{
   \ifx#1\relax \ensuremath{\Bigl\Vert#2\Bigr\Vert}
   \else \ensuremath{\Bigl\Vert#2\Bigr\Vert_{#1}}
   \fi}
\newcommand{\mix}{\operatorname{mix}}
\newcommand{\Bv}[1]{\left\llbracket\,#1\,\right\rrbracket}
\newcommand{\Clop}{\mathrm{Clop}}
\DeclareMathAlphabet{\mathpzc}{OT1}{pzc}{m}{it}
\newcommand{\Om}{\Omega}
\newcommand{\emdf}{\bf}
\newcommand{\vanish}[1]{\relax}
\newcommand{\abs}[1]{\vert #1 \vert}           
\newcommand{\norm}[1]{\Vert #1 \Vert}           
\newcommand{\car}{\mathbf{1}}
\newcommand{\Ce}{\mathrm{C}}
\newcommand{\veps}{\varepsilon}
\newcommand{\set}[1]{\left[\,#1\,\right]}
\newcommand{\sprod}[2]{(#1|#2)}
\newcommand{\konj}[1]{\overline{#1}}
\newcommand{\leer}{\emptyset}
\newcommand{\Ball}{\mathrm{B}}
\newcommand{\BL}{\mathscr{L}}
\newcommand{\calF}{\mathcal{F}}
\newcommand{\tensor}{\otimes}
\newcommand{\Ell}[1]{\mathrm{L}^{#1}}
\newcommand{\B}{\mathbb{B}}
\newcommand{\dann}{\Rightarrow}
\newcommand{\gdw}{\Leftrightarrow}
\newcommand{\ocl}{\mathrm{ocl}}
\renewcommand{\ds}{\mathrm{ds}}
\newcommand{\wm}{\mathrm{wm}}
\renewcommand{\A}{\mathbb{A}}
\renewcommand{\B}{\mathbb{B}}
\newcommand{\cl}{\mathrm{cl}}
\newcommand{\scrE}{\mathscr{E}}
\newcommand{\prX}{\mathrm{X}}
\newcommand{\prY}{\mathrm{Y}}
\numberwithin{equation}{section}
\theoremstyle{plain}
  \newtheorem{theorem}{Theorem}[section]
  \newtheorem{lemma}[theorem]{Lemma}
\newtheorem{corollary}[theorem]{Corollary}
\newtheorem{proposition}[theorem]{Proposition}
\theoremstyle{definition}
\newtheorem{definition}[theorem]{Definition}
\newtheorem{remark}[theorem]{Remark}
\newtheorem{example}[theorem]{Example}
\def\mytocname{Table of Contents}
\setlist[enumerate]{
  topsep = 1ex,  
  itemsep = 1ex, 
  partopsep = 1ex,
  parsep = 0ex,
  leftmargin = *,
  labelsep = 0.2cm,
  label = (\alph{enumi}),
  font = \normalfont,
  ref = \labelenumi
}
\setlist[itemize]{
  partopsep = 1ex,
  parsep = 1ex
}
\newenvironment{aufzi}{\begin{enumerate}[(a)]}{\end{enumerate}}
\newenvironment{aufzii}{\begin{enumerate}[(i)]}{\end{enumerate}}
\newlist{aufziv}{enumerate}{1}
\setlist*[aufziv]{label= \upshape{(\arabic*)},
  leftmargin=0pt,itemindent=2em, itemsep=0.7ex}
\author{Markus Haase}
\address{Markus Haase, Mathematisches Seminar, Christian-Albrechts-Universität zu Kiel, Heinrich-Hecht-Platz 6, 24118 Kiel, Germany}
\email{haase@math.uni-kiel.de}
\author{Henrik Kreidler}
\address{Henrik Kreidler, Fachgruppe Mathematik und Informatik, Bergische Universität Wuppertal, Gaußstraße 20,
42119 Wuppertal, Germany}
\email{kreidler@uni-wuppertal.de}
\title[Precompactness in Kaplansky--Hilbert modules]{Precompactness
  notions in Kaplansky--Hilbert modules and extensions with discrete spectrum}
\date{9 February 2026}
\subjclass[2020]{Primary 37A15, 46H25; Secondary 06E15.}
\begin{document}
\renewcommand{\contentsname}{\mytocname}


\begin{abstract}
This paper is a continuation of our work on the functional-analytic core of the
classical Furstenberg–Zimmer theory.  We introduce and study (in the
framework of lattice-ordered spaces) 
the notions of total order-boundedness and 
uniform total order-boundedness. Either one generalizes the concept
of ordinary precompactness known from
 metric space theory. These new notions are then used to define and
 characterize ``compact extensions'' of general measure-preserving
 systems (with no restrictions on the underlying probability 
spaces nor on the acting groups). In particular, it is (re)proved that
compact extensions 
and extensions with discrete spectrum are one and the same
thing. Finally, we show that 
under natural hypotheses a subset of a Kaplansky–Banach module is
totally order bounded if 
and only if it is cyclically compact (in the sense of Kusraev).
\end{abstract}

\maketitle

\section{Introduction}

The Furstenberg--Zimmer structure theorem is one of the 
great  milestones in structural ergodic theory. It goes back
to the seminal works of Zimmer \cite{Zimm1976,Zimm1976b} and Furstenberg \cite{Furs1977}. 
The heart of the
matter is a diligent examination of {\em extensions} of
measure-preserving systems and a fundamental {\em dichotomy}: either
an extension is ``weakly mixing'' or it has an in some sense
``well-structured'' intermediate extension.
Zimmer showed that ``well-structured'' could mean ``with relative
discrete spectrum'' or, equivalently, ``isometric''. Furstenberg
recovered Zimmer's results and added ``compact'' in a later paper
\cite{FuKa1978}, see also \cite{Furstenberg1981}.

Both authors relied on  technical assumptions on the underlying
probability space (a standard Lebesgue space) and on the acting group
(Zimmer: standard Borel group, Furstenberg: $\Z^d$). After the strong
popularization of structural ergodic theory in the aftermath of the
Fields medals for Gowers (1998) and Tao (2004), there was a growing
interest in freeing the result from the abovementioned restrictions and to 
gain an understanding of it in terms of more abstract structures. 

Relatively recently, this has been achieved by several different (but
related) approaches. Our own one, in a work together with N. Edeko,
makes use of  {\em Kaplansky--Hilbert
  modules} (the theory which had to a large extent to be developed for this
purpose), cf.\,\cite{EHK2024}. This approach
differs from others in that the main
dichotomy for extensions of measure-preserving systems
is reduced to an abstract,  purely functional-analytic statement
about so-called {\em KH-dynamical systems}.  In particular,
the Hilbert space structure of the
surrounding $\Ell{2}$-space is not employed.

However, the dichotomy proved in \cite{EHK2024} involves only 
the notion of ``discrete spectrum'' for the structured part of the
extension. And it remained  open how to define ``compact'' extensions in 
our abstract setting and to prove that compact extensions are 
the same as extensions with discrete spectrum. It is the purpose
of this paper (announced at the end of \cite{EHK2024}) to fill this gap.

\medskip
The paper is organized as follows. In Section \ref{s.tob} we first recall the
notion
of a lattice-normed space (but refer to Appendix \ref{s.app} for 
all the other related notions like order-convergence, Stone algebra 
or Kaplansky--Hilbert module). Then we define {\em totally
  order-bounded} and {\em uniformly totally order-bounded} subsets of
such spaces as generalizations of ordinary precompactness. Moreover,
we  establish a couple of relevant technical properties (Lemma
\ref{tob.l.properties}). The main result is that a bounded subset of a finite-rank
KH-module  is uniformly totally order-bounded (Proposition
\ref{tob.p.finrank}). Finally, 
following Tao's definition in \cite{Tao2009} an
alternative
compactness notion involving {\em zonotopes} is considered and equivalence 
with uniform total order-boundedness is proved.

In Section \ref{s.cpe} we recall from \cite{EHK2024} the relevant
notions from the theory of (abstract)
measure-preserving systems and their extensions, in particular
extensions with discrete spectrum. Next, we introduce {\em compact
extensions} (employing the abstract precompactness notions from
Section \ref{s.tob}). The main results are then Theorem
\ref{cpe.t.cpds} and its Corollary \ref{cpe.c.cpds}, in which
compact extensions are characterized in several ways, and  in
particular as extensions with discrete spectrum. The
proof of this characterization, however, does not rely only on 
results from abstract KH-dynamical systems theory, but involves the  
surrounding Hilbert space structure in a crucial way. 
Whether this can be avoided remains open, see Remark
\ref{cpe.r.cpe-KH-char?}.

In the last Section \ref{s.rcc}, we relate the precompactness notions
from Section \ref{s.tob} with the notion of {\em cyclical compactness}
appearing in the works of Kusraev \cite{Kusr2000} and mentioned 
at several places in \cite{EHK2024}.
As a main result we find that
under natural assumptions a mix-complete subset of a lattice-normed space
is totally order-bounded if and only if it is relatively cyclically
compact (Proposition \ref{rcc.p.tob-rcc}).

\medskip
Let us make some remarks concerning originality. 
As mentioned above, the equality of compact extensions and 
extensions of discrete spectrum (in Corollary \ref{cpe.c.cpds}) is classical under the usual assumptions in standard ergodic theory.
In the framework of ``abstract''
measure-preserving systems---and hence  without restrictions on the
underlying probability space nor on the acting group---%
it has been proved by Jamneshan in
\cite[Theorem 4.1]{Jam2023}. Moreover, also  the connections to
notions from conditional set theory and Boolean-valued analysis have
been 
pointed out at several occasions in Jamneshan's work, see
\cite[Remarks 3,5, 4.3, 4.5, 4.7, and 6.4]{Jam2023}. 

Our contribution in this article regards less the results itself but
rather  the theoretical framework to obtain them. 
It emphasizes on the purely functional-analytic essence of (the
largest part) of the results and is another step  
towards explaining  how the plethora of concepts/results for extensions 
appearing in the ergodic theoretic literature can be stated/proved
in purely functional-analytic terms. Where in the usual approach
to extensions of measure--preserving systems arguments  
are motivated by {\em analogies}  with classical concepts
or results from Banach or Hilbert space theory, we show that there is
a purely functional-analytic theory that renders the results
actual {\em generalizations}. 

We also hope that our work (together
with \cite{EHK2024} and the upcoming article \cite{HaaKrepre} on Furstenberg's main
theorem) will  make the powerful Furstenberg--Zimmer theory more accessible, in
particular for readers with a functional-analytic background.

\subsection*{Acknowledgements}
The authors are grateful for inspiring discussions with Nikolai Edeko
and Asgar Jamneshan, and for helpful comments of the anonymous
referee. Both authors also acknowledge the financial
support from the DFG (Henrik Kreidler: project number  451698284;  Markus Haase:
project number 431663331).

\section{Total Order-Boundedness}\label{s.tob}

In this section we generalize in several ways
the notion of total boundedness (=precompactness) for subsets
of normed spaces to subsets of
lattice-normed spaces. To this aim, we 
fix (once and for all) 
a commutative unital $C^*$-algebra $\A$
and consider it as a Banach lattice in the
canonical way.  

A {\emdf lattice-normed space} over $\A$ is a vector space $E$ together with a
mapping  $\abs{\cdot}\colon E \to \A_+$ with the following properties:
\[  \abs{x}=0 \gdw x=0,\quad \abs{\lambda x} = \abs{\lambda} \abs{x},\quad \abs{x+y}\le \abs{x} + \abs{y} \qquad  (x,y\in E,\, \lambda\in \C).
\]
The {\emdf closed ball} and the {\emdf open ball}
in this norm with center $x\in E$ and radius 
$t\in\A_+ $ are the sets
\[  \Ball_E[x;t] := \{ y\in E \suchthat \abs{x- y}\le t\} \quad 
\text{and}\quad  
\Ball_E(x;t) := \{ y\in E \suchthat \abs{x- y} < t\},
\]
respectively. 

Each lattice-normed space carries a
natural norm given by $\norm{x}_E := \norm{ \abs{x} }_\A$ for $x\in
E$. Given $r \in \R_{\ge 0}$ one has $\abs{x} \le r\car$ iff $\norm{x}_E
\le r$ and hence  
\[   \Ball_E[x; r\car] = \{ y\in E \suchthat \abs{y - x}\le r\} = 
\{ y \in E \suchthat \norm{y-x}_E \le r\} = \Ball_E[x;r]
\]
where the latter set is the closed ball with respect to the norm.  

A lattice-normed space over $\A= \C$ is nothing
but a normed space, and in this case $\norm{\cdot}_E$ coincides with
the original norm. So, a lattice-normed space 
is an analogue of a  normed space, but with elements of 
$\A_+$ taking the roles of the possible values
for the ``norm''. If $E$ is even 
a lattice-normed {\em module}, then elements
of $\A$ take the role of the scalars in all  
respects, and the analogy is even more striking.

\medskip
The elementary theory of lattice-normed spaces
is strongly analogous to the theory 
of normed spaces, but with 
the usual notions of convergence, closedness, continuity and
completeness suitably modified
(namely: {\em order-convergence}, {\em order-closedness}, {\em
  order-continuity}, and {\em order-completeness}). This theory
reduces to the classical one in the case $\A= \C$, and is
only slightly more difficult in the general case.
However, as we certainly cannot speak of common
knowledge here, we have collected basic definitions
and some elementary statements in Appendix \ref{s.app}. For proofs and
further information we refer to \cite[Sec.{ }1]{EHK2024}. The
presentation there centers around
 {\em Kaplansky--Hilbert modules},
which  are those instances of lattice-normed
spaces most relevant for us.

\medskip

Let us now turn to the main topic of this section.

\smallskip

\begin{definition}\label{tob.d.tob}
	Let $E$ be a lattice-normed space over a unital commutative C*-algebra $\mathbb{A}$. A subset $M \subset E$ is \textbf{totally order-bounded} or 
\textbf{order-precompact} if 
there is a net $(u_\alpha)_{\alpha}$ in $\mathbb{A}_+$  decreasing to
zero and such that for every $\alpha$ there is a finite set
$F\subseteq E$ with
		\begin{align*}
			\inf_{y\in F} |x-y| \leq u_\alpha \quad \textrm{ for every } x \in M.
		\end{align*}
And $M$ is \textbf{uniformly totally order-bounded} if for every
$\veps \in \R_{> 0}$ there is a finite set $F\subseteq E$ such that
		\begin{align*}
			\inf_{y\in F} |x-y| \leq \veps \car
                  \quad \textrm{ for every } x \in M.
		\end{align*}
\end{definition}

\smallskip

\begin{remark}\label{tob.r.tob}
\begin{aufziv}
\item Uniform total boundedness 
is related to the notion of relative uniform convergence in vector lattices, see \cite[Sec.\,1.3.4]{Kusr2000}.
It generalizes in 
the most straightforward way
ordinary precompactness in
  a normed space to subsets of lattice-normed spaces,
namely by replacing the ordinary norm by the 
lattice-norm. 

In the context of extensions $\prX|\prY$ of
probability spaces, uniformly totally order-bounded 
subsets of $\Ell{2}(\prX|\prY)$ are sometimes called
{\em conditionally precompact}. See also 
Remark \ref{cpe.r.cpc} below.

\item Observe the validity of the implications
\[ \text{$M$ unif.{}  totally order-bdd}\quad \dann\quad
\text{$M$ totally order-bdd}\quad\dann\quad \text{$M$ (order-)bdd}.
\]
The first implication is trivial; the second one follows from
\[  \abs{x} \le \inf_{y\in F} \abs{x-y} + \sup_{y\in F} \abs{y} \qquad
\text{for all $x\in E$ and all finite $F\subseteq E$.}
\]
Evidently, if $\A = \C\car$ 
then 
``totally order-bdd = uniformly order-bdd = precompact''. 
\end{aufziv}
\end{remark}

\medskip

From now on we suppose that {\em $\A$ is 
order-complete as a lattice-ordered space
over itself}. That is, $\A$ is a
{\emdf Stone algebra} (see Appendix \ref{s.app}).
Then we may  rephrase the definitions above 
in terms 
of (order-)convergence. To this aim, consider
the set 
\[ \Pot_{\fin}(E) = \{ F \subseteq E \suchthat \text{$F$ finite}\}
\]
of all finite subsets of $E$ to be upwards directed by
inclusion. To  a given bounded subset $M\subseteq E$ we
associate the decreasing net
\[  F \mapsto \sup_{x\in M} \inf_{y\in F} \abs{x-y}
\qquad(F\in \Pot_{\fin}(E)).
\]
Then $M$ is totally order-bounded if and only if 
			\begin{align*}		
\inf_{F \in \Pot_\fin(E)} \sup_{x \in M} \inf_{y \in F} |x-y| =0,
			\end{align*}
which means that the said net order-converges/decreases to zero in $\A_+$. 
Similarly, $M$ is uniformly totally order-bounded if and only if 
			\begin{align*}
				\inf_{F \in \Pot_\fin(E)}\Bnorm{\sup_{x \in M} \inf_{y \in F} |x-y|}_{\A} =0,
			\end{align*}
		i.e., if the net $F \mapsto \sup_{x \in M} \inf_{y \in F} |x-y|$ norm-converges to zero in $\A$.

\medskip

As mentioned, every uniformly totally order-bounded subset is totally
order-bounded. The following example shows that the converse is false,
even for subsets of Kaplansky--Hilbert modules.

\smallskip
\begin{example}
Let $H$ be an infinite-dimensional Hilbert space with
norm $\norm{\cdot}_H$. The space $E := \ell^\infty(\N; H)$
of bounded $H$-valued sequences is a KH-module over $\A :=
\ell^\infty= \ell^\infty(\N; \C)$, the space of all bounded scalar
sequences. The lattice-valued norm is 
\[   \abs{f} := ( n \mapsto \norm{f(n)}_H).
\]
Let $(e_n)_{n \in \N}$ be an orthonormal system in $H$. We claim that the set
\[ M := \{ \car_{\{k\}} \tensor e_j \suchthat k,j\in \N,\, 1\le j \le k\}
\]
is totally order-bounded, but not uniformly totally 
order-bounded in $E$. (We write $f\tensor e$ for the function
$(n\mapsto f(n)e)\colon \N \to H$ whenever $f\in \ell^\infty$ and $e\in H$.)

To prove the first claim, we take $n \in \N$ and let 
\[ F_n := \{0\} \cup \{ \car \tensor e_l \suchthat 1\le l \le n\}.
\]
Fix $1 \le j \le  k$. Then, since $g = 0 \in F_n$,
\[ \inf_{g\in F_n} \abs{ \car_{\{k\}} \tensor e_j - g} 
\le \car_{\{k\}}.
\]
If, in addition, $j \le n$, then 
\[ \inf_{g\in F_n} \abs{ \car_{\{k\}} \tensor e_j - g} 
\le \abs{ \car_{\{k\}} \tensor e_j - \car \tensor e_j} 
=  \car_{\{k\}^c}. 
\]
This shows that 
\[ \inf_{g\in F_n} \abs{ \car_{\{k\}} \tensor e_j - g} 
= 0 \quad \text{on}\quad \{1, \dots, n\}.
\]
It follows that 
\[ \sup_{f\in M}  \inf_{g\in F_n} \abs{f - g} 
\le \sqrt{2} \, \car_{\{1, \dots, n\}^c} \searrow 0 \quad (n \to \infty),
\]
proving the (first) claim. 

For the proof of the second, observe that if $F = \{g_1, \dots, g_d\}
\subseteq E$ is a finite set and $n > d$, then the set
\[  \bigcup_{j=1}^d \Ball_H\bigl(g_j(n); \tfrac{1}{2}\sqrt{2}\bigr)
\]
cannot contain all the basis vectors $e_1, \dots, e_n$. 
Hence, there is $1\le i \le n$ with 
\[ \min_{j\le d} \norm{e_i - g_j(n)}_H \ge \tfrac{1}{2}\sqrt{2}.
\]
This leads to 
\[    \min_{j\le d} \abs{\car_{\{n\}} \tensor e_i - g_j} \ge
\tfrac{1}{2}\sqrt{2} \, \car_{\{n\}},
\]
violating uniform total order-boundedness of $M$. 
\end{example}

\smallskip

In the following lemma we collect some elementary 
properties of (uniformly) totally order-bounded sets.

\smallskip

\begin{lemma}\label{tob.l.properties}
Let $E, E_1, E_2$ be lattice-ordered spaces over a Stone algebra $\A$. 
\begin{aufzi} 
\item Each finite subset of $E$ is uniformly totally order bounded.
If $M \subseteq E$ is (uniformly) totally order-bounded, then so
  is each subset of $M$. 

\item If $M, N \subseteq E$ are (uniformly) totally order-bounded, then so are
  $M+N$ and $M \cup N$.

\item Suppose that $m \colon E \times E_1\to E_2$ is 
a bilinear mapping with $\abs{m(x,y)}\le \abs{x} \abs{y}$ for all
$x\in E, y\in E_1$. If $M\subseteq E$ and $N\subseteq E_1$ are
(uniformly) totally
order-bounded, then so is $m(M, N) \subseteq E_2$.

\item Let $M\subseteq E$ and suppose that $N \subseteq \A_+$ satisfies $\inf N = 0$
  ($\inf_{t\in N} 
  \norm{t}_{\A} = 0$) and for each
  $t\in N$ there is a (uniformly) totally order-bounded set $M_t\subseteq E$ with 
\[   M \subseteq M_t + \Ball_E[0; t].
\]
Then $M$ is (uniformly) totally order-bounded.

\item If $M \subseteq E$ is (totally) order-bounded, then so is 
its order-closure $\ocl(M)$.
\item Suppose
that  $T\colon E \to E_1$ is linear and order-bounded, i.e.
there is $c \in \R_{>0}$ with $\abs{Tx}\le c\abs{x}$ for all $x\in E$. Then
if $M\subseteq E$ is (uniformly) totally order-bounded, so is
$T(M)\subseteq E_1$.

\item If $M \subseteq E$ is bounded and $V \subseteq E$ is arbitrary,
  then  
\[     \inf \bigl\{ \sup_{x\in M} \inf_{y\in F} \abs{x-y} \bsuchthat F
\subseteq V\, \text{\rm finite}\bigr\}
= \inf \bigl\{ \sup_{x\in M} \inf_{y\in F} \abs{x-y} \bsuchthat F
\subseteq \ocl(V)\, \text{\rm finite}\bigr\}.
\]
In particular, if $M$ is totally order-bounded and $V$ is order-dense
in $E$, then 
\[    \inf_{F\in \Pot_\fin(V)} \sup_{x\in M} \inf_{y\in F} \abs{x-y} =0.
\]

\item Suppose that $E$ is a lattice-normed module, 
$r \in \R_{> 0}$ and $M \subseteq \Ball_E[0;r]$. 
Then for each finite set $F\subseteq E$ there is 
a finite set $F'\subseteq \Ball_E[0;2r]$ with
$\#F'\le \#F$ and 
\[     \inf_{y'\in F'} \abs{x-y} \le \inf_{y\in F} \abs{x-y}.
\]
In particular, one may replace
general finite subsets $F$ of $E$ by 
finite subsets of $\Ball_E[0;2r]$
in the definition of (uniformly)
totally order-bounded  sets. 
\end{aufzi} 
\end{lemma}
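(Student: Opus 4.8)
The plan is to handle each centre $y \in F$ separately and to \emph{truncate} it by an idempotent of $\A$ so that it lands in $\Ball_E[0;2r]$ while never becoming a worse approximant than $y$ itself for any point of $M$. Concretely, for $y \in F$ I would let $p_y \in \A$ be the support (band) projection of the positive element $(\abs{y} - 2r\car)_+$, and put $q_y \defeq \car - p_y$ and $y' \defeq q_y\, y \in E$. Intuitively $p_y$ is the indicator of the region where $\abs{y} > 2r$, so multiplying by $q_y$ resets $y$ to zero exactly where it is ``too large''. I then set $F' \defeq \{q_y y \bsuchthat y \in F\}$; since $y \mapsto q_y y$ maps $F$ onto $F'$, one automatically has $\#F' \le \#F$.

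Two properties then have to be verified, both resting on the module identity $\abs{az} = \abs{a}\abs{z}$ for $a \in \A$, $z\in E$, and on the consequence that for an idempotent $q$ one has $\abs{qz_1 + (\car - q)z_2} = q\abs{z_1} + (\car-q)\abs{z_2}$ (apply $q$ and $\car - q$ and use $\abs{qz}=q\abs{z}$). First, $y' \in \Ball_E[0;2r]$: by the defining property of the support projection, $p_y(\abs{y}-2r\car) = (\abs{y}-2r\car)_+ \ge 0$, hence $q_y\abs{y} \le 2r\,q_y \le 2r\car$, i.e. $\abs{y'} = q_y\abs{y} \le 2r\car$. Second, $\abs{x-y'}\le\abs{x-y}$ for every $x\in M$: writing $x - y' = q_y(x-y) + p_y x$ and applying the displayed identity gives $\abs{x-y'} = q_y\abs{x-y} + p_y\abs{x}$, so it suffices to prove $p_y\abs{x}\le p_y\abs{x-y}$. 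This is exactly where the hypothesis $M\subseteq\Ball_E[0;r]$ enters: on the band $p_y$ one has $\abs{y}\ge 2r$, whereas $\abs{x}\le r\car$, so $p_y\abs{y}\ge 2r\,p_y \ge 2\,p_y\abs{x}$; combined with the reverse triangle inequality $\abs{x-y}\ge\abs{y}-\abs{x}$ this yields $p_y\abs{x-y}\ge p_y\abs{y}-p_y\abs{x}\ge p_y\abs{x}$.

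Granting these two facts, for every $x\in M$ and every $y\in F$ we get $\inf_{y'\in F'}\abs{x-y'} \le \abs{x-q_y y}\le\abs{x-y}$, and taking the infimum over $y\in F$ yields the asserted inequality $\inf_{y'\in F'}\abs{x-y'}\le\inf_{y\in F}\abs{x-y}$. For the ``in particular'' clause I would note that a (uniformly) totally order-bounded set is order-bounded by Remark \ref{tob.r.tob}, say $\abs{x}\le s$ for all $x\in M$ and some $s\in\A_+$; then $\norm{x}_E\le\norm{s}_\A =: r$, so $M\subseteq\Ball_E[0;r]$. Given any finite $F$ realising the approximation in Definition \ref{tob.d.tob}, the set $F'\subseteq\Ball_E[0;2r]$ produced above satisfies $\inf_{y'\in F'}\abs{x-y'}\le\inf_{y\in F}\abs{x-y}$ for all $x\in M$, so the \emph{same} net $(u_\alpha)$ (respectively the same $\veps$) witnesses total (respectively uniform total) order-boundedness using only centres from $\Ball_E[0;2r]$.

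The only genuinely nontrivial ingredient is the existence of the support projection $p_y$ together with its two defining inequalities; this is where order-completeness of the Stone algebra $\A$ is used (equivalently, in the Gelfand picture $\A = C(K)$ with $K$ extremally disconnected, $p_y$ is the characteristic function of the clopen set $\overline{\{\abs{y}>2r\}}$). Everything else is a short pointwise estimate. The one subtlety to watch is that $p_y$ must depend on $y$ alone, not on $x$; this is secured precisely because all $x\in M$ share the uniform bound $\abs{x}\le r\car$, which lets one fix the single truncation threshold $2r$ for a given $y$ once and for all.
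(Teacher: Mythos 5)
Your proof is correct and is essentially the paper's own argument: your $q_y$ (complement of the support projection of $(\abs{y}-2r\car)_+$) is exactly the paper's idempotent $p = \car_{\{\abs{y}\le 2r\}^\circ}$, and both proofs then use the decomposition $\abs{x-q_y y} = q_y\abs{x-y} + p_y\abs{x}$ together with $2\abs{x}\le 2r\le\abs{y}$ on the band $p_y$ and the reverse triangle inequality to get $\abs{x-q_y y}\le\abs{x-y}$. Your explicit write-up of the ``in particular'' clause (boundedness of $M$ giving the radius $r$, and the same net $(u_\alpha)$ respectively the same $\veps$ still witnessing the approximation) is a detail the paper leaves implicit, but it matches the intended reading.
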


\begin{proof}
(a) is trivial. 

\prfnoi
(b)\ The proof of the statement about $M\cup N$ follows from the fact that 
in $\A$ from $u_\alpha  \searrow 0$ and $v_\alpha \searrow 0$ it
follows that $u_\alpha \vee v_\beta \searrow 0$. 
(This is obviously true if $u_\alpha, v_\alpha \in \R\car$, and this covers
the ``uniform'' case of the statement. But it is
even true in every vector lattice: if $h \le u_\alpha \vee v_\alpha$ for each
$\alpha$, then $h \le u_\beta \vee v_\alpha \le u_\beta + v_\alpha$,
and hence  $h - v_\alpha \le u_\beta$, for
$\beta \ge \alpha$. This yields $h-v_\alpha \le 0$ for all $\alpha$
and hence $h \le 0$. See also \cite[Thm.{} 15.9]{LuxZaa1}.)

Let $G, H\subseteq E$ be finite. Then 
\[  \abs{(a+b) - (z+w)} \le \abs{a-z} + \abs{b-w} \qquad (a\in M, b\in
N, z\in G, w\in H).
\]
This implies 
\[  \inf_{y\in F} \abs{(a+b) - y} \le \inf_{z\in G} \abs{a-z} 
+ \inf_{w\in H} \abs{b-w} \qquad (a\in M, b\in N)
\]
whenever $F \subseteq E$ is finite with $F \supseteq G + H$. And this
implies
\[  \sup_{x\in M{+}N} \inf_{y\in F} \abs{x-y} 
=  \sup_{a\in M, b\in N} \inf_{y\in F} \abs{(a+b) - y} \le \sup_{a\in
  M} \inf_{z\in G} \abs{a-z} 
+ \sup_{b\in N} \inf_{w\in H} \abs{b-w}
\]
for $G{+} H \subseteq F \subseteq E$ finite. This yields (b).

\prfnoi
(c)\ Define $A := \sup_{a\in M} \abs{a}$ and $B := \sup_{b\in N}
\abs{b}$.  Let $G \subseteq E$ and $H \subseteq E_1$ be finite. Then
\begin{align*}
   \abs{m(a,b) - m(z,w)} & = \abs{m(a,b-w) - m(a-z,b-w) + m(a-z, b)} 
\\ & \le A \abs{b-w} + \abs{a-z}\abs{b-w} + B\abs{a-z} 
\end{align*}
for $ a\in M,\: b\in N,\: z\in
     G,\: w\in H$. This implies
\begin{align*}
\inf_{y \in m(G,H)} \abs{m(a,b) - y} & =
\inf_{z\in G, w\in H} \abs{m(a,b) - m(z,w)}
\\ & \le 
A \inf_{w\in H} \abs{b-w} +  \inf_{z\in G} \abs{a-z}\, \inf_{w\in H} \abs{b-w} + B \inf_{z\in G}\abs{a-z}. 
\end{align*}
(Here we used that $\inf_\alpha f g_\alpha = f \inf_\alpha g_\alpha$ in $\A$
whenever $f\in \A_+$.)  From this we may pass to
\begin{align*}
  \sup_{x\in m(M,N)} & \inf_{y \in m(G,H)} \abs{x - y}
\\ & \le A \sup_{b\in N} \inf_{w\in H} \abs{b-w} +  \sup_{a\in M} \inf_{z\in G}
\abs{a-z}\: \sup_{b\in N} \inf_{w\in H} \abs{b-w} + B \sup_{A \in M} \inf_{z\in G}\abs{a-z}.
\end{align*}
And this yields (c).

\prfnoi
(d)\  Fix $t\in N$ and $F\subseteq E$ finite. For $x\in M$ let
$x_t\in M_t$ with $\abs{x- x_t} \le t$. Then 
\[  \abs{x- y} \le t + \abs{x_t- y} \qquad (y\in F)
\]
and hence 
\[ \inf_{y\in F} \abs{x- y} \le t + \sup_{z\in M_t} \inf_{y\in F}
\abs{z- y}.
\]
It follows that 
\[ \sup_{x\in M} \inf_{y\in F} \abs{x- y} \le t + \sup_{z\in M_t} \inf_{y\in F}
\abs{z- y},
\]  
and this implies (d).

\prfnoi
(e)\ Let $f \in \A_+$ and $F\subseteq E$ is finite. Then the set of
$x\in E$ satisfying
\[    \inf_{y\in F} \abs{x- y} \le f
\]
is order-closed in $E$ (simply because the mapping $x\mapsto
\inf_{y\in F} \abs{x-y}$ is order-continuous).  Hence, if each $x\in
M$ satisfies the inequality, then also each $x\in \ocl(M)$ does. This
proves the claim. 

\prfnoi
(f)\ This follows from 
\[  \inf_{y\in F}\abs{Tx - Ty}\le c \inf_{y\in F} \abs{x- y}
\qquad \text{for all $x\in M$ and all $F\subseteq E$ finite.}
\]
(g)\ Fix $d\in \N$. For $y\in V^d$, $z\in \ocl(V)^d$ and $x\in M$ one has
\[   \inf_{j=1,\dots, d} \abs{x- y_j}  \le \abs{x -z_k} + \sum_{j=1}^d
\abs{y_j - z_j} \qquad (k= 1, \dots, d).
\]
Taking first the infimum with respect to $k$ and then the supremum with respect
to $x\in M$ yields
\[   \sup_{x\in M} \inf_{j=1,\dots, d} \abs{x- y_j}
\le \sup_{x\in M}\inf_{j=1,\dots, d} \abs{x- z_j} +  \abs{z - y}_1
\]
with $\abs{\cdot}_1$ being the lattice-norm on $E^d$. By Lemma
\ref{app.l.ocl-prod}
(and an induction argument),  $\ocl(V)^d = \ocl(V^d)$, hence taking the
infimum over all $y\in V^d$ we obtain 
(see also the inclusion \eqref{app.eq.ocl})
\[   \inf_{y\in V^d} \sup_{x\in M} \inf_{j=1,\dots, d} \abs{x- y_j} 
\le \sup_{x\in M} \inf_{j=1,\dots, d} \abs{x- z_j}.
\]
And this implies
\[    \inf_{y\in V^d} \sup_{x\in M} \inf_{j=1,\dots, d} \abs{x- y_j} 
\le \inf_{z\in \ocl(V)^d} \sup_{x\in M} \inf_{j=1,\dots, d} \abs{x- z_j}.
\]
Finally, take the infimum with respect to $d\in \N$ to obtain the 
nontrivial inequality between the two quantities in the claim.
The additional statement follows readily. 

\prfnoi
(h)\ Let $F \subseteq E$ be an arbitrary finite set. Fix 
$y\in F$, let $p := \Bv{\abs{y} \le 2r\car} := \car_{\set{\abs{y}\le
    r}^\circ}$ and define $y' := py$.
Then $\abs{y'} \le  2r\car$ and 
$p^c2\abs{x} \le p^c 2r \le p^c\abs{y}$, and hence 
\[ \abs{x - y'} = p\abs{x-y} + p^c\abs{x}
\le p\abs{x-y} + p^c(\abs{y} - \abs{x}) 
\le \abs{x-y} \qquad (x\in M).
\]
It follows that for $x\in M$
\[ \inf_{y' \in F'} \abs{x-y'} \le \inf_{y\in F} \abs{x-y},
\]
where $F'= \{ y' \suchthat y\in F\}$. 
\end{proof}

For Kaplansky-Hilbert modules of finite rank (see \cite[Sec. 2.5]{EHK2024}) we obtain the following Heine--Borel-type theorem.

\smallskip

\begin{proposition}\label{tob.p.finrank}
Let $\A$ be a Stone algebra. Then $\Ball_\A[0;1]$ is 
uniformly totally order-bounded.  
More generally, let $E$ be a KH-module of finite rank over
$\mathbb{A}$. For $M \subseteq E$ the following assertions are equivalent:
		\begin{aufzii}
			 \item $M$ is order-bounded.\label{hb1}
			 \item $M$ is totally order-bounded.\label{hb2}
			 \item $M$ is uniformly totally order-bounded.\label{hb3}
		\end{aufzii}
\end{proposition}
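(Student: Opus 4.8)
The plan is to dispose of the two easy implications via Remark \ref{tob.r.tob} and concentrate all the work on the one substantial step, the passage from order-boundedness to uniform total order-boundedness.

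I would first settle the claim about $\Ball_\A[0;1]$, as it is the engine for everything else. Writing $\A\cong C(K)$ with $K$ the (Stonean) Gelfand spectrum, an element $x$ lies in $\Ball_\A[0;1]$ precisely when $\abs{x(k)}\le 1$ for every $k\in K$, i.e. $x$ takes its values in the closed unit disk $\overline{\D}\subseteq\C$. Given $\veps>0$, cover $\overline{\D}$ by finitely many $\veps$-balls $\Ball_\C(c_1;\veps),\dots,\Ball_\C(c_N;\veps)$ and set $F:=\{c_1\car,\dots,c_N\car\}$ (the corresponding constant functions). Since finite lattice operations in $C(K)$ are computed pointwise, one has $\bigl(\inf_{j}\abs{x-c_j\car}\bigr)(k)=\min_j\abs{x(k)-c_j}\le\veps$ for every $k\in K$, whence $\inf_{y\in F}\abs{x-y}\le\veps\car$ for all $x\in\Ball_\A[0;1]$. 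This is exactly uniform total order-boundedness.

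Next I would reduce the general finite-rank case to this one. By the structure theory of finite-rank KH-modules (\cite[Sec.\,2.5]{EHK2024}), $E$ carries a finite orthonormal basis $e_1,\dots,e_n$, so that every $x\in E$ expands as $x=\sum_{j=1}^n\dprod{x}{e_j}e_j$ with $\abs{\dprod{x}{e_j}}\le\abs{x}$ (module Cauchy--Schwarz, using $\abs{e_j}\le\car$). The implications (iii) $\dann$ (ii) $\dann$ (i) are recorded in Remark \ref{tob.r.tob}, so it remains to prove (i) $\dann$ (iii). Let $M$ be order-bounded, say $M\subseteq\Ball_E[0;r\car]$. For each $j$ the coordinate set $\{\dprod{x}{e_j}\suchthat x\in M\}$ lies in $\Ball_\A[0;r\car]=r\,\Ball_\A[0;1]$, which is uniformly totally order-bounded by the first part together with Lemma \ref{tob.l.properties}(f) (scaling) and (a). Applying the order-bounded linear map $a\mapsto a e_j$ (Lemma \ref{tob.l.properties}(f), with $\abs{ae_j}=\abs{a}\abs{e_j}\le\abs{a}$) and summing over $j$ (Lemma \ref{tob.l.properties}(b)), the set $\sum_{j=1}^n\{ae_j\suchthat a\in\A,\ \abs{a}\le r\car\}$ is uniformly totally order-bounded; since $M$ is contained in it by the basis expansion, Lemma \ref{tob.l.properties}(a) finishes the argument.

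The only genuine input beyond bookkeeping with Lemma \ref{tob.l.properties} is the existence of the finite orthonormal basis, i.e. the structure theorem for finite-rank KH-modules; hence the main obstacle is conceptual (pinning down and invoking that structure, including the support projections attached to the basis elements) rather than computational. If one prefers to avoid an explicit basis, the same reduction goes through with any finite generating frame $e_1,\dots,e_n$ together with order-bounded coordinate maps $c_j\colon E\to\A$ satisfying $x=\sum_j c_j(x)e_j$ and $\abs{c_j(x)}\le C\abs{x}$; the constant $C$ is immaterial for the conclusion.
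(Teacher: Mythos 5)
Your proposal is correct and follows essentially the same route as the paper: cover the complex unit disk by finitely many $\veps$-balls to handle $\Ball_\A[0;1]$, then expand elements of $M$ in a basis of the finite-rank module, bound the coordinates to place $M$ inside $\sum_j \Ball_\A[0;r]\cdot e_j$, and conclude via the permanence properties of Lemma \ref{tob.l.properties}. One small correction: finite-rank KH-modules in general admit only a \emph{suborthonormal} basis (the $\abs{e_j}$ are idempotents, not $\car$), not an orthonormal one, but since your argument only uses the expansion $x=\sum_j\sprod{x}{e_j}e_j$ and $\abs{e_j}\le\car$ (you bound coordinates by Cauchy--Schwarz where the paper uses the Pythagorean identity $\abs{f}^2=\sum_j\abs{\lambda_j}^2$), it goes through unchanged.
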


\begin{proof}
Since $\A$ is a rank-one KH-module over itself, the first assertion is a
corollary of the second. However, we shall need the special
case in the proof of the more general statement. 

For $\veps \in \R_{> 0}$ find a finite set
$F\subseteq \C$ with $\Ball_{\C}[0;1] \subseteq \bigcup_{z\in F}
\Ball_{\C}[z;\veps]$. Then 
\[ \inf_{z\in F} \abs{f - z\car} \le \veps\car\qquad \text{for
each $f\in \Ball_\A[0; 1]$.}
\]
Now let $E$ be a KH-module over $\A$ and let
$e_1, \dots e_d$ be a suborthonormal basis for $E$ 
\cite[Sec.\,2.3]{EHK2024}. Since
$M\subseteq E$ is order-bounded, there is $c\in \R_{\ge 0}$ with
$\abs{f} \le c \car$ for all $f\in M$. Given $f\in M$ we may write
\[  f = {\sum}_{j=1}^d \lambda_j e_j 
\]
for certain $\lambda_j \in \abs{e_j} \A$. Then
\[  c^2\car \ge \abs{f}^2 = {\sum}_{j=1}^d \abs{\lambda_j}^2 
\]
and hence $\abs{\lambda_j} \le c\car$ for each $j=1, \dots, d$. This shows
that 
$M \subseteq \sum_{j=1}^d \Ball_\A[0; c] \cdot e_j$. 
Since $\Ball_\A[0;c]
\subseteq \A$ is uniformly  totally order bounded
(by what we already have shown), the claim follows 
from Lemma \ref{tob.l.properties}.
\end{proof}

In the measure-theoretic situation we have the following close connection between the two notions of total order-boundedness.

\begin{proposition}\label{tob.p.frkhm}
	Let $E$ be a KH-module over $\Ell{\infty}(\uY)$ for some
        probability space $\prY = (Y, \Sigma_\prY, \mu_\prY)$. Then
for a bounded subset $M \subseteq E$ the following assertions are equivalent:
		\begin{aufzii}
			\item $M$ is totally order-bounded.\label{normvsorder1}
			\item For every $\veps \in \R_{>0}$ there is a measurable set $A \in \Sigma_\prY$ with $\mu_\prY(A^c) \leq \varepsilon$ such that $\car_{A}M$ is uniformly totally order-bounded.\label{normvsorder2}
		\end{aufzii}
\end{proposition}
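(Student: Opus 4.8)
The plan is to reduce the asserted equivalence to a clean statement about decreasing nets in $\A := \Ell{\infty}(\uY)$ and then to invoke Egorov's theorem. To the bounded set $M$ I associate the decreasing net
\[
\psi_F := \sup_{x\in M}\inf_{y\in F}\abs{x-y}\in\A_+ \qquad (F\in\Pot_\fin(E)),
\]
which is bounded above by $\sup_{x\in M}\abs{x}$. By the reformulations recorded just before the example, assertion \eqref{normvsorder1} says precisely that $\inf_F\psi_F = 0$ in the order sense, while for a fixed $A\in\Sigma_Y$ the (again bounded) set $\car_A M$ is uniformly totally order-bounded if and only if $\inf_F\norm{\psi_F^A}_\A = 0$, where $\psi_F^A := \sup_{x\in M}\inf_{y\in F}\abs{\car_A x - y}$.

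First I would show that the band projection $\car_A$ allows one to replace $\psi_F^A$ by $\car_A\psi_F$. Since $\car_A$ (multiplication by the idempotent $\car_A\in\A$) is order-continuous and commutes with suprema and infima in $\A$, and since $\abs{\car_A z}=\car_A\abs{z}$ in the module $E$, one obtains on the one hand $\psi_F^A \geq \car_A\psi_F$ for every $F$ (using $\abs{\car_A x - y}\geq\abs{\car_A x - \car_A y}=\car_A\abs{x-y}$), and on the other hand $\psi_{\car_A F}^A = \car_A\psi_F$ for the cut-down family $\car_A F := \{\car_A y \suchthat y\in F\}$. Taking norms and infima over all finite $F$ then yields
\[
\inf_F\norm{\psi_F^A}_\A = \inf_F \norm{\car_A\psi_F}_\A = \inf_F \esssup_A \psi_F .
\]
Hence \eqref{normvsorder2} is equivalent to: for every $\veps>0$ there is $A$ with $\mu_Y(A^c)\leq\veps$ and $\inf_F\esssup_A\psi_F = 0$. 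With this, the whole proposition becomes the following statement about the decreasing, order-bounded net $h_F := \psi_F$ in $\A_+$: one has $\inf_F h_F = 0$ (order) if and only if for every $\veps>0$ there is $A$ with $\mu_Y(A^c)\leq\veps$ and $\inf_F\norm{\car_A h_F}_\A = 0$.

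To prove this last statement, the first — and I expect the main — step is a passage from the net to a sequence. Because $\mu_Y$ defines an order-continuous (normal) positive functional on $\Ell{\infty}(\uY)$, one has $\int h_F\,\mathrm{d}\mu_Y \downarrow \int (\inf_F h_F)\,\mathrm{d}\mu_Y$. Picking an increasing sequence of finite sets $F_1\subseteq F_2\subseteq\cdots$ with $\int h_{F_n}\,\mathrm{d}\mu_Y \to \inf_F\int h_F\,\mathrm{d}\mu_Y$, the decreasing sequence $(h_{F_n})_n$ has a pointwise a.e.\ limit $\tilde h := \inf_n h_{F_n}$ with $\int\tilde h\,\mathrm{d}\mu_Y = \int(\inf_F h_F)\,\mathrm{d}\mu_Y$; since $\inf_F h_F \leq \tilde h$ and their integrals agree, $\tilde h = \inf_F h_F$ a.e. Thus the order-infimum of the net is realized as the a.e.\ pointwise limit of a single decreasing sequence. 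Now the two directions are routine. If $\inf_F h_F = 0$, then $h_{F_n}\downarrow 0$ a.e., and Egorov's theorem (here finiteness of $\mu_Y$ enters) produces, for each $\veps$, a set $A$ with $\mu_Y(A^c)\leq\veps$ on which $h_{F_n}\to 0$ uniformly, whence $\inf_F\norm{\car_A h_F}_\A = 0$. Conversely, if for each $\veps$ such an $A$ exists, then from $0\leq\car_A\inf_F h_F\leq\car_A h_F$ and $\inf_F\norm{\car_A h_F}_\A = 0$ one gets $\inf_F h_F = 0$ a.e.\ on $A$; applying this with $\veps = 1/n$ and taking $A = \bigcup_n A_{1/n}$, whose complement is null, forces $\inf_F h_F = 0$ a.e.

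I anticipate the delicate points to be exactly the two places where measure theory replaces pure order theory: the extraction of an increasing sequence of index sets realizing the order-infimum (which rests on the order-continuity of integration against $\mu_Y$, i.e.\ on countable additivity) and the application of Egorov's theorem. The band-projection manipulations in the reduction step, though they must be carried out carefully, are formal consequences of $\abs{\car_A z}=\car_A\abs{z}$ and the order-continuity of $\car_A$, and everything else is bookkeeping with the reformulations already established in this section.
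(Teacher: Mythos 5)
Your proof is correct and takes essentially the same route as the paper's: both directions rest on realizing the order-infimum of the net $F \mapsto \sup_{x\in M}\inf_{y\in F}\abs{x-y}$ by a countable increasing sequence of finite sets (you obtain this from normality of integration against $\mu_Y$, the paper by citing the countable realization of infima in $\Ell{\infty}(\uY;\R)$) followed by Egorov's theorem, and on the same cut-down estimates with $\car_A$ for the converse. Your explicit band-projection reduction and the union over the sets $A_{1/n}$ are only cosmetic repackagings of steps the paper performs inline.
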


\begin{proof}
	Assume \ref{normvsorder1} and fix $\veps \in \R_{> 0}$. Note that 
in $\Ell{\infty}(\uY; \R)$ an infimum of a bounded subset must coincide
with the infimum of some countable subset (see, e.g. \cite[Thm. 7.6]{EFHN}). 
Hence, we find a sequence $(F_n)_{n \in \N}$ of finite subsets of $E$ such that
		\begin{align*}
                  0 = \inf_{F \in \Pot_{\fin}(E)} \sup_{x \in M} \inf_{y \in F} |x-y| = \inf_{n \in \N} \sup_{x \in M} \inf_{y \in F_n} |x-y|.
		\end{align*}
Without loss of generality we may suppose that $(F_n)_{n \in \N}$ is increasing, i.e., $F_n \subset F_{n+1}$ for all $n \in \N$. Then the sequence $(\sup_{x \in M} \inf_{y \in F_n} |x-y|)_{n \in \N}$ decreases and thus order-converges to zero. In particular, it converges to zero almost everywhere (see \cite[Lemma 7.5]{EHK2024}). By Egorov's theorem we thus find $A \in \Sigma_\uY$ with $\mu_\uY(A^c) \leq \veps$ such that the sequence
		\begin{align*}
  n \mapsto	\car_{A} \sup_{x \in M} \inf_{y \in F_n} |x-y|  = \sup_{x \in M} \inf_{y \in F_n} |\car_{A}x- \car_{A}y|
		\end{align*}
converges to zero in the norm of $\mathrm{L}^\infty(\uY)$.
	
Conversely,  assume \ref{normvsorder2} holds and set $c \coloneqq \sup \{\|x\|\mid x \in M\} \in [0,\infty)$. For every $n \in \N$ we then find $A_n \in \Sigma_\uY$ with $\mu_\uY(A_n^c) \leq \frac{1}{n}$ and a finite subset $F_n \subseteq E$ such that
		\begin{align*}
			 \sup_{x \in M} \inf_{y \in F_n} |\car_{A_n}x- y| \leq \tfrac{1}{n}\car.
		\end{align*}
	But then
		\begin{align*}
			\sup_{x \in M} \inf_{y \in F_n} |x - y| \leq \tfrac{1}{n}\car + c\car_{A_n^c}
		\end{align*}
	for every $n \in \N$. Since $\inf_{n \in \N} (\frac{1}{n}\car + c\car_{A_n^c}) = 0$, we obtain \ref{normvsorder1}.
\end{proof}

We conclude this section by comparing the two notions
of total order-boundedness with yet another similar property. For this
we suppose $E$ to be not just  a lattice-ordered space over $\A$, but a lattice-ordered {\em module}, see \cite[Def. 1.3]{EHK2024}.

\smallskip

\begin{definition}
Let $E$ be a lattice-ordered module over the commutative unital $C^*$-algebra
$\A$. For a finite set $F\subseteq E$ let
\[  \uZ_F := \Bigl\{ \sum_{y\in F} \lambda_y y \bsuchthat \lambda \in 
\A^F,\, \sup_{y\in F} \abs{\lambda_y} \le\car  \Bigr\}
\]
be the $\A$-{\emdf zonotope} in $E$ determined by $F$. 
A subset $M$ of $E$ has {\emdf property (CP)}
 if for each $\veps \in \R_{> 0}$ there is a finite set $F\subseteq E$ such that 
\[  M \subseteq \uZ_F + \Ball_E[0; \veps].
\]
\end{definition}

In the context of extensions of measure-preserving systems,
property (CP) was introduced  by 
Tao in \cite[Def. 2.13.7]{Tao2009}. Our terminology
is reminiscent of ``conditionally precompact'', which is
the term Tao uses. Cf. also Remark \ref{cpe.r.cpc} below.

\begin{proposition}\label{tob.p.zono}
Let $E$ be a lattice-normed module over a Stone algebra $\A$. 
Then a
subset $M \subseteq E$ has property \textup{(CP)} if and only if it is 
uniformly totally order-bounded. 
\end{proposition}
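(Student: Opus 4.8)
The plan is to prove both implications. The direction ``uniformly totally order-bounded $\Rightarrow$ (CP)'' will use the order-completeness of $\A$ to glue a finite $\veps$-net into a single zonotope coefficient vector, while the converse will identify a zonotope as the order-bounded-linear image of a ball in a finite-rank KH-module and invoke the Heine--Borel theorem (Proposition~\ref{tob.p.finrank}).

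First I would show that uniform total order-boundedness implies (CP). Fix $\veps\in\R_{>0}$ and, by uniform total order-boundedness, choose a finite set $F=\{y_1,\dots,y_d\}\subseteq E$ with $\inf_j\abs{x-y_j}\le\tfrac{\veps}{2}\car$ for all $x\in M$. For a fixed $x\in M$, the enlarged sets $\{\abs{x-y_j}<\veps\car\}$ form an open cover of the (Stonean) base space of $\A$, since at each point the minimum of the $\abs{x-y_j}$ is $\le\tfrac{\veps}{2}<\veps$; hence the projections $e_j:=\Bv{\abs{x-y_j}\le\veps\car}$ satisfy $\sup_j e_j=\car$, and the slack $\tfrac{\veps}{2}$ is exactly what lets one conclude this without a Baire-category argument. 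Disjointifying via $p_1:=e_1$ and $p_j:=e_j\wedge\bigwedge_{i<j}e_i^c$ yields an orthogonal family of projections with $\sum_j p_j=\car$ and $p_j\abs{x-y_j}\le\veps p_j$. Setting $z:=\sum_j p_j y_j\in\uZ_F$ (note $\sup_j\abs{p_j}=\sup_j p_j\le\car$), the disjointness of supports together with $\abs{pv}=p\abs{v}$ for a projection $p$ gives $\abs{x-z}=\bigabs{\sum_j p_j(x-y_j)}=\sum_j p_j\abs{x-y_j}\le\veps\car$. Thus $M\subseteq\uZ_F+\Ball_E[0;\veps]$, which is (CP).

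For the converse I would first establish that every zonotope $\uZ_F$ is uniformly totally order-bounded. Writing $F=\{y_1,\dots,y_d\}$, consider the free KH-module $\A^d$ of rank $d$ and the linear map $T\colon\A^d\to E$, $T\lambda:=\sum_j\lambda_j y_j$. From $\abs{T\lambda}\le\sum_j\abs{\lambda_j}\abs{y_j}\le\bigl(\sum_j\abs{y_j}\bigr)\sup_j\abs{\lambda_j}\le c\abs{\lambda}$ with $c:=\sum_j\norm{y_j}_E$, one sees that $T$ is order-bounded in the sense of Lemma~\ref{tob.l.properties}(f). The box $B:=\{\lambda\in\A^d\suchthat\sup_j\abs{\lambda_j}\le\car\}$ is order-bounded in $\A^d$, hence uniformly totally order-bounded by Proposition~\ref{tob.p.finrank}; since $\uZ_F=T(B)$, Lemma~\ref{tob.l.properties}(f) shows $\uZ_F$ is uniformly totally order-bounded. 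Now assume $M$ has (CP): for each $\veps>0$ there is a finite $F_\veps$ with $M\subseteq\uZ_{F_\veps}+\Ball_E[0;\veps\car]$ and $\uZ_{F_\veps}$ uniformly totally order-bounded. Applying Lemma~\ref{tob.l.properties}(d) with $N:=\{\veps\car\suchthat\veps>0\}$, for which $\inf_{t\in N}\norm{t}_\A=0$, yields that $M$ is uniformly totally order-bounded.

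I expect the main obstacle to be the gluing step in the first implication: converting the local information $\inf_j\abs{x-y_j}\le\veps\car$ into one global zonotope element. The essential device is the order-completeness of $\A$, which supplies the partition of unity $(p_j)$ and, via disjointness of supports, the identity $\abs{\sum_j p_j(x-y_j)}=\sum_j p_j\abs{x-y_j}$. The converse is comparatively routine once $\uZ_F$ is recognized as the order-bounded-linear image of a ball in a finite-rank KH-module, so that the Heine--Borel theorem does the work.
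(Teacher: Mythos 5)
Your proof is correct and follows essentially the same route as the paper's: the forward direction is the paper's partition-of-unity argument (the paper simply asserts the existence of the idempotents that you construct explicitly; your extra $\veps/2$ slack is harmless but not actually needed on a Stonean space), and the converse, like the paper's, reduces to Proposition~\ref{tob.p.finrank} plus Lemma~\ref{tob.l.properties}(d). The only cosmetic difference is how you see that $\uZ_F$ is uniformly totally order-bounded: you realize it as the image of the unit box in the finite-rank module $\A^d$ under an order-bounded linear map and invoke Lemma~\ref{tob.l.properties}(f), whereas the paper uses the inclusion $\uZ_F \subseteq \sum_{y\in F}\Ball_\A[0;1]\cdot y$ together with parts (b) and (c) of that lemma.
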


\begin{proof}
The first implication is rather trivial. If $M$ is uniformly totally
order-bounded and $\veps \in \R_{> 0}$, then there is a finite set $F\subseteq E$ with $\inf_{y\in F} \abs{x- y} \le \veps \car$ for each $x\in M$.
Hence, given $x\in M$ one finds idempotents $p_y = p_y^2 \in  \A$ with
$\sum_{y\in   F} p_Y = \car$ and   
$p_y \abs{x-y} \le \veps \car$ for each $y\in F$. 
It follows that $x \in \sum_{y\in F} p_y y + \Ball_E[0; \veps] \subseteq
\uZ_F + \Ball_E[0;\veps]$.

For the second implication note that 
\[ \uZ_F  = \sum_{y \in F} \Ball_\A[0;1] \cdot y 
\]
is uniformly totally order-bounded, by Lemma
\ref{tob.l.properties} and Proposition \ref{tob.p.finrank}. 
Hence, if $M$ has (CP) then $M$ is uniformly
totally order-bounded, again by Lemma \ref{tob.l.properties}.
\end{proof}

\section{Compact Extensions}\label{s.cpe}

In this section we apply the abstract terminology and results from
Section \ref{s.tob} to the structure theory of measure-preserving
dynamics. 
To wit, we characterize extensions with discrete spectrum as compact
extensions. In the classical case
of $\Z^d$-dynamics on separable probability spaces, this is due to
Furstenberg \cite[Theorem 6.13]{Furstenberg1981}. The general case is
due to Jamneshan \cite[Theorem 4.1]{Jam2023}.  

\medskip
We briefly describe the set-up, for details see
\cite{EHK2024}. Let $G$ be a group (any group, no topology).  A {\emdf
  measure-preserving} 
$G$-system is pair $(\prX; T)$ consisting of a probability space
$\prX$ and a representation $T= (T_t)_{t\in G} \colon G \to
\BL(\Ell{1}(\prX))$
of $G$ as {\em Markov embeddings} (= unital, integral-preserving
linear lattice homomorphisms).  An {\emdf extension} of 
two measure-preserving systems $(\prY;S), (\prX;T)$ is a Markov
embedding
$J\colon \Ell{1}(\prY) \to \Ell{1}(\prX)$ with $JS_t = T_tJ$ for all $t\in
G$. We write $J\colon (\prY;S)\to (\prX;T)$ to denote
the extension, or just $\prX|\prY$, for the sake of simplicity.

Given an extension $\prX|\prY$, the space $\Ell{2}(\prX)$ becomes an
$\Ell{\infty}(\prY)$-module in a natural way. (Basically, $J$
identifies
$\Ell{2}(\prY)$ with 
$\Ell{2}(X, \calF,\mu_\prX)$ for some sub-$\sigma$-algebra $\calF$ of $\Sigma_\prX$.)
Along with the  extension comes the {\emdf conditional expectation operator} 
$\Exp_\prY\colon \Ell{1}(\prX) \to \Ell{1}(\prY)$, which on the level of
$\Ell{2}$-spaces is nothing but the adjoint
$\Exp_{\prY} = J^*$ of $J$.  It coincides with the classical conditional expectation under
the identification of $\Ell{1}(\prY)$ with its range under $J$. 

It turns out that the space
\[   \Ell{2}(\prX|\prY) := \{ f\in \Ell{2}(\prX) \suchthat 
\Exp_{\prY} \abs{f}^2 \in \Ell{\infty}(\prY) \}
\]
is a Kaplansky--Hilbert module over the Stone algebra $\A :=
\Ell{\infty}(\prY)$ with respect to the inner product
\[  \sprod{f}{g}_\prY := \Exp_\prY (f\konj{g}),
\]
see \cite[Prop.\,7.6]{EHK2024}.
The lattice-valued norm of an element $f\in \Ell{2}(\prX|\prY)$ is
\[ \abs{f}_\prY := \sqrt{\sprod{f}{f}_\prY} = \sqrt{ \Exp_\prY \abs{f}^2}.
\]
Note the difference between $\abs{f}_\prY$ and $\abs{f}$, the latter
being the usual (pointwise) modulus of $f$. 

Since $\Ell{2}(\prX|\prY)$ is $G$-invariant and $\sprod{T_tf}{T_t g}_\prY = S_t
\sprod{f}{g}_\prY$, one arrives at a {\em KH-dynamical system} as
defined in \cite[Sec.\,5.1]{EHK2024}, see also \cite[Sec.\,7.2]{EHK2024}.

\medskip
On $\Ell{2}(\prX|\prY)$ we have three 
natural notions of convergence: convergence with respect to the norm
\[  \norm{f}_{\Ell{2}(\prX|\prY)} = \norm{ \abs{f}_{\prY}
}_{\Ell{\infty}(\prY)},
\]
convergence with respect to the $\Ell{2}$-topology as a subspace of
$\Ell{2}(\prX)$, and order-convergence (see Appendix \ref{s.app}).  
The following result 
relates these notions.

\begin{lemma}\label{cpe.l.norm-order}
Let $\prX|\prY$ be an extension of measure--preserving systems.
Then the following assertions hold:
\begin{aufzi}
\item Within $\Ell{2}(\prX|\prY)$ each norm convergent net
and each order-convergent net is $\Ell{2}$-convergent
(to the same limit).  
\item If $(f_n)_n$ is a sequence in $\Ell{2}(\prX|\prY)$
which $\Ell{2}$-converges to $f\in \Ell{2}(\prX|\prY)$, then there is
a subsequence $(h_n)_n$ with the following property: 
for each $\delta \in \R_{> 0}$ there is $E\in \Sigma_\prY$ with
$\mu_\prY(E^c) < \delta$ and $\car_E h_n \to \car_E f$ in the
norm of $\Ell{2}(\prX|\prY)$.

\item If $M \subseteq \Ell{2}(\prX|\prY)$ is a
  $\Ell{\infty}(\prY)$-submodule, then in $\Ell{2}(\prX|\prY)$ its order-closure 
coincides with its $\Ell{2}$-closure.
In particular, 
$M$ is order-dense in $\Ell{2}(\prX|\prY)$ iff 
$M$ is dense in $\Ell{2}(\prX)$. 
\item If $M \subseteq \Ell{2}(\prX|\prY)$ is (uniformly) totally
  order-bounded, then so are the sets 
\[ \abs{M} := \{ \,\abs{f}\,  \suchthat  f\in M\} \quad
\text{and}\quad M' := \{ \konj{f} \suchthat f\in M\}. 
\] 
\end{aufzi}
\end{lemma}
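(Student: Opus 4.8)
The plan rests on a single bridge between the module structure and the ambient Hilbert space: for every $f \in \Ell{2}(\prX|\prY)$ one has $\norm{f}_{\Ell{2}(\prX)}^2 = \int_Y \Exp_\prY\abs{f}^2 \dmu_\prY = \norm{\abs{f}_\prY}_{\Ell{2}(\prY)}^2$, so that $f \mapsto \abs{f}_\prY$ is an isometry from $(\Ell{2}(\prX|\prY), \norm{\cdot}_{\Ell{2}(\prX)})$ into $\Ell{2}(\prY)$. I would transport each convergence notion to the scalar lattice $\A = \Ell{\infty}(\prY)$ through this identity together with the idempotent calculus $\abs{\car_E f}_\prY = \car_E \abs{f}_\prY$ for $E \in \Sigma_\prY$ (valid because $\Exp_\prY$ is an $\Ell{\infty}(\prY)$-module map). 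For (a), norm convergence $f_\alpha \to f$ says $\abs{f_\alpha - f}_\prY \to 0$ in $\Ell{\infty}(\prY)$, and since the sup-norm dominates the $\Ell{2}(\prY)$-norm on the probability space $\prY$, the isometry yields $\Ell{2}(\prX)$-convergence; for an order-convergent net there is $u_\alpha \downarrow 0$ in $\A_+$ dominating $\abs{f_\alpha - f}_\prY$, and because $\Ell{\infty}(\prY)$ is an order-dense ideal of $\Ell{2}(\prY)$ the infimum is preserved there, whence the order-continuity of the $\Ell{2}(\prY)$-norm gives $\norm{u_\alpha}_{\Ell{2}(\prY)} \to 0$; uniqueness of the $\Ell{2}$-limit identifies the limits. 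For (b), the sequence $g_n := \abs{f_n - f}_\prY \in \A_+$ tends to $0$ in $\Ell{2}(\prY)$, so a subsequence $h_n$ has $g_n \to 0$ almost everywhere, and Egoroff on $\prY$ produces, for each $\delta$, a set $E$ with $\mu_\prY(E^c) < \delta$ on which $g_n \to 0$ uniformly; uniform convergence is exactly $\norm{\car_E g_n}_{\Ell{\infty}(\prY)} \to 0$, and $\car_E g_n = \abs{\car_E h_n - \car_E f}_\prY$, i.e.\ $\car_E h_n \to \car_E f$ in the norm of $\Ell{2}(\prX|\prY)$.

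For (c), one inclusion is immediate from (a): order-limits are $\Ell{2}$-limits, so the relative $\Ell{2}$-closure of $M$ inside $\Ell{2}(\prX|\prY)$ is order-closed and contains $M$, giving $\ocl(M) \subseteq$ it. For the converse I would take $f$ an $\Ell{2}$-limit of $f_n \in M$ and feed the sequence into (b). Here the submodule hypothesis is decisive: $\car_E h_n \in M$, and norm convergence forces order convergence (dominate $\abs{\car_E h_n - \car_E f}_\prY$ by the net $n \mapsto \sup_{m \ge n} \abs{\car_E h_m - \car_E f}_\prY$, which decreases to $0$ in $\A$), so $\car_E f \in \ocl(M)$. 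Taking $\delta = 1/k$ and replacing the sets by their increasing unions (uniform convergence survives finite unions), I obtain an increasing $(E_k)$ with $\mu_\prY(E_k^c) \to 0$ and $\car_{E_k} f \in \ocl(M)$; since $\car_{E_k^c} \downarrow 0$ in $\A$ one has $\abs{\car_{E_k} f - f}_\prY = \car_{E_k^c}\abs{f}_\prY \downarrow 0$, so $\car_{E_k} f \to f$ in order and $f \in \ocl(M)$ by idempotency of the order-closure. The "in particular" then follows because $\Ell{2}(\prX|\prY)$ is $\Ell{2}$-dense in $\Ell{2}(\prX)$, so $\Ell{2}$-denseness of $M$ in the module and in $\Ell{2}(\prX)$ coincide.

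For (d), complex conjugation is an isometry for $\abs{\cdot}_\prY$, since $\Exp_\prY\abs{\konj{f-g}}^2 = \Exp_\prY\abs{f-g}^2$, and the pointwise modulus is non-expansive, since $\abs{\,\abs{f}-\abs{g}\,} \le \abs{f-g}$ pointwise yields $\abs{\,\abs{f}-\abs{g}\,}_\prY \le \abs{f-g}_\prY$. Consequently, replacing a finite set $F$ by $\konj{F}$ respectively $\abs{F} := \{\abs{y} : y \in F\}$ turns any $\veps\car$-net (or $u_\alpha$-net) for $M$ into one for $M'$ respectively $\abs{M}$, exactly along the lines of Lemma \ref{tob.l.properties}(f) (which, strictly, applies verbatim to the isometry given by conjugation, and whose defining inequality is what the modulus estimate supplies).

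The main obstacle is the reverse inclusion in (c): reconstructing genuine order-convergence to $f$ out of mere $\Ell{2}$-convergence, which is a priori weaker. The subsequence-plus-Egoroff localization of (b), the submodule hypothesis needed to keep $\car_E h_n \in M$, and the subsequent exhaustion $\car_{E_k} f \to f$ are precisely what bridge this gap; this is also the one point where the surrounding Hilbert space structure is used in an essential way rather than merely the KH-module axioms.
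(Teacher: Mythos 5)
Your proof is correct and takes essentially the same route as the paper: for (a)--(c) the paper simply defers to \cite[Lemma 7.5]{EHK2024}, whose underlying argument is exactly your combination of the isometry $\norm{f}_{\Ell{2}(\prX)} = \bnorm[\Ell{2}(\prY)]{\abs{f}_\prY}$, preservation of the infimum under the ideal inclusion $\Ell{\infty}(\prY)\subseteq \Ell{2}(\prY)$ together with order continuity of the $\Ell{2}(\prY)$-norm, and an a.e.-convergent subsequence plus Egoroff, with the submodule hypothesis used precisely where you use it (to keep $\car_E h_n \in M$) and with $\Ell{2}(\prX|\prY)$ dense in $\Ell{2}(\prX)$ for the ``in particular''. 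Your part (d) --- the reverse triangle inequality $\abs{\,\abs{f}-\abs{g}\,}_\prY \le \abs{f-g}_\prY$ and conjugation-invariance of $\abs{\cdot}_\prY$, applied to $\abs{F}$ resp.\ $\konj{F}$ in place of $F$ --- is verbatim the paper's argument.
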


\begin{proof}
For (a) see \cite[Lemma 7.5(ii)]{EHK2024} and its proof, for (b) see
the proof of \cite[Lemma 7.5(iii)]{EHK2024}. (c) is a consequence of (b)
as in \cite[Lemma 7.5.(iii)]{EHK2024} and the fact that
$\Ell{2}(\prX|\prY)$ is dense in $\Ell{2}(\prX)$. For the proof of (d)
we observe that for $f,g\in \Ell{2}(\prX|\prY)$ one has 
\[  \abs{\abs{f} - \abs{g}}_\prY \le \abs{f-g}_\prY
\]
as a consequence of the reverse triangle inequality for the modulus.
And this implies 
\[  \sup_{u\in \abs{M}} \inf_{v\in \abs{F}} \abs{u-v}_\prY
= \sup_{f\in M} \inf_{g\in F} \abs{\abs{f} - \abs{g}}_\prY
\le \sup_{f\in M} \inf_{g\in F} \abs{f-g}_\prY
\]
for any finite $F \subseteq \Ell{2}(\prX|\prY)$. The proof
for $M'$ is similar. 
\end{proof}

\medskip
We proceed with recalling a definition from 
\cite{EHK2024}\footnote{Actually, the definition in \cite{EHK2024} 
is slightly different, but proved to be equivalent in \cite[Prop. 8.5]{EHK2024}.}.

\begin{definition}
For an extension $\prX|\prY$ of measure-preserving $G$-systems
its {\emdf (relative) Kronecker subspace}
is 
\[ \scrE(\prX|\prY) :=  \cl_{\Ell{2}} \bigcup
\bigl\{ \text{finitely generated $G$-invariant
  $\Ell{\infty}(\prY)$-submodules
of $\Ell{2}(\prX)$}\bigr\}.
\]
The extension $\prX|\prY$  has  {\emdf discrete spectrum} if 
$\scrE(\prX|\prY) = \Ell{2}(\prX)$.
\end{definition}

In \cite{EHK2024}, extensions with discrete spectrum were approached through
the theory of KH-dynamical systems. In particular, it was proved there
that 
\[  \scrE(\prX|\prY) = \cl_{\Ell{2}}\,  \FM_T(\prX|\prY)
\]
where 
\[ \FM_T(\prX|\prY) := 
\bigcup \bigl\{ \text{$G$-invariant finite-rank KH-submodules of $\Ell{2}(\prX|\prY)$}\bigr\}, 
\]
see \cite[Prop. 8.5]{EHK2024}.

\vanish{

Theorem 6.9 from
\cite{EHK2024} yields the $\prY$-orthogonal decomposition
\[ \Ell{2}(\prX|\prY) = 
\Ell{2}(\prX|\prY)_{\ds} \oplus \Ell{2}(\prX|\prY)_{\wm} 
\]  
where $\Ell{2}(\prX|\prY)_{\ds}$ is the
``discrete spectrum part'' (generated
by the invariant finite-rank KH-submodules) of the associated KH-system, and
$\Ell{2}(\prX|\prY)_{\wm}$ is the ``weakly mixing part'',
cf. also \cite[Sec.s\,8.2 and 8.3]{EHK2024}.  One has
\[ \scrE(\prX|\prY) = \cl_{\Ell{2}} \Ell{2}(\prX|\prY)_{\ds}
\]
and hence the extension $\prX|\prY$ has discrete spectrum iff 
the weakly mixing part is $\{0\}$, see  \cite[Prop. 8.5]{EHK2024}.  
}

\medskip
Next, we define compact extensions.

\begin{definition}
Let $\prX|\prY$ be an extension of measure preserving
$G$-systems $(\prX;T)$ and $(\prY;S)$.  
An element $f\in \Ell{2}(\prX|\prY)$
is said to be {\emdf conditionally almost periodic} if its {\emdf orbit} $T_Gf :=
\{ T_t f \suchthat t\in G\}$ is uniformly
totally order-bounded in $\Ell{2}(\prX|\prY)$, i.e., if for each
$\veps \in \R_{>0}$ there is a finite set $F\subseteq
\Ell{2}(\prX|\prY)$ such that 
\[    \inf_{g\in F} \abs{T_t f - g}_\prY \le \veps \car \quad
\text{for all $t \in G$.}
\]
The extension $\prX|\prY$ is {\emdf compact}, if the
set 
\[ \AP_T(\prX|\prY) := \{ f\in \Ell{2}(\prX|\prY) \suchthat 
\text{$f$ is conditionally almost periodic}\}
\]
is dense in $\Ell{2}(\prX)$.
\end{definition}

\begin{remark}\label{cpe.r.cpc}
If $\prX|\prY$ is an extension then it is common
to call a subset $M \subseteq \Ell{2}(\prX|\prY)$
{\emdf conditionally precompact} if it
is uniformly totally order-bounded. By Proposition
\ref{tob.p.zono}, this is consistent with 
Tao's definition of this term in \cite[Def.\,2.13.7]{Tao2009}.
Hence, with this
terminology a function $f$ is conditionally almost
periodic if its orbit is conditionally 
precompact. (And this, still,  is consistent with 
 \cite[Def.\,2.13.7]{Tao2009}.) 
\end{remark}

The following is a straightforward
consequence of Lemma \ref{tob.l.properties}(d)  and Lemma \ref{cpe.l.norm-order}.

\begin{proposition}\label{cpe.p.ap-module}
Given an extension $\prX|\prY$ of measure-preserving
$G$-systems, the sets
\[ \AP_T(\prX|\prY)\quad \text{and}\quad 
\{ f\in \Ell{2}(\prX|\prY) \suchthat 
T_Gf \,\,\text{\rm is totally order-bounded}\}
\]
are norm-closed $G$-invariant, conjugation- and modulus-invariant
 $\Ell{\infty}(\prY)$-submodules  of $\Ell{2}(\prX|\prY)$. 
\end{proposition}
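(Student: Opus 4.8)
The plan is to handle both sets at once. Write $E := \Ell{2}(\prX|\prY)$ and, for $f\in E$, recall that $T_Gf = \{T_tf \suchthat t\in G\}$; then $f$ belongs to $\AP_T(\prX|\prY)$ (resp.\ to the second set) exactly when its orbit $T_Gf$ is uniformly totally order-bounded (resp.\ totally order-bounded). Denoting by $M$ either of the two sets, I would prove each asserted closure property by translating it into a statement about orbits and invoking the corresponding part of Lemma \ref{tob.l.properties} or Lemma \ref{cpe.l.norm-order}(d). The structural inputs about the maps $T_t$ that I would use are: they are complex-linear lattice homomorphisms, multiplicative on $\Ell{\infty}$, whence $T_t\konj g = \konj{T_tg}$, $T_t\abs g = \abs{T_tg}$ and $T_t(af) = (S_ta)(T_tf)$ for $a\in\Ell{\infty}(\prY)$; and they satisfy $\sprod{T_tg}{T_th}_\prY = S_t\sprod{g}{h}_\prY$ together with $S_t\car = \car$.

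First I would dispatch the algebraic invariances, where the orbit statement is immediate. $G$-invariance is trivial because $T_G(T_sf) = T_Gf$ for every $s\in G$. For conjugation- and modulus-invariance, the identities above give $T_G\konj f = (T_Gf)'$ and $T_G\abs f = \abs{T_Gf}$ in the notation of Lemma \ref{cpe.l.norm-order}(d), so that lemma puts $\konj f$ and $\abs f$ back into $M$ whenever $f\in M$. Additivity follows from the inclusion $T_G(f+g)\subseteq T_Gf + T_Gg$ combined with Lemma \ref{tob.l.properties}(b) and (a).

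Next, closure under multiplication by $a\in\Ell{\infty}(\prY)$. Multiplicativity of $T_t$ on the base gives
\[ T_G(af) \subseteq \{(S_sa)(T_tf)\suchthat s,t\in G\} = m\bigl(\{S_sa\suchthat s\in G\},\,T_Gf\bigr), \]
where $m\colon \A\times E\to E$ is the module multiplication, which satisfies $\abs{bg}_\prY = \abs b\,\abs g_\prY$ and hence the hypothesis of Lemma \ref{tob.l.properties}(c). Since each $S_s$ is $\A$-norm-preserving, the orbit $\{S_sa\suchthat s\in G\}$ is order-bounded, hence uniformly totally order-bounded by Proposition \ref{tob.p.finrank} (as $\A$ has rank one); in particular it is totally order-bounded in either regime. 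Lemma \ref{tob.l.properties}(c) and (a) then yield that $T_G(af)$ is (uniformly) totally order-bounded, i.e.\ $af\in M$.

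The real content is norm-closedness, and I expect the only genuine obstacle to be obtaining an estimate that is uniform in $t\in G$. Here the inner-product identity and $S_t\car = \car$ give, for a norm-convergent sequence $f_n\to f$ in $E$ with $f_n\in M$,
\[ \abs{T_tf - T_tf_n}_\prY^2 = S_t\bigl(\abs{f-f_n}_\prY^2\bigr) \le \norm{f-f_n}_E^2\,\car, \]
so that $\abs{T_tf - T_tf_n}_\prY \le \norm{f-f_n}_E\,\car$ for every $t$. This means $T_Gf \subseteq T_Gf_n + \Ball_E[0;\norm{f-f_n}_E\car]$ for each $n$, and an application of Lemma \ref{tob.l.properties}(d) with $N := \{\norm{f-f_n}_E\car\suchthat n\in\N\}$ (whose order-infimum, and norm-infimum, is $0$) and $M_{t_n} := T_Gf_n$ shows $T_Gf$ to be (uniformly) totally order-bounded, i.e.\ $f\in M$. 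With this uniform isometry estimate in hand, every remaining step is a direct reading-off of an orbit inclusion against Lemma \ref{tob.l.properties} and Lemma \ref{cpe.l.norm-order}(d).
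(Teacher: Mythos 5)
Your proof is correct and takes essentially the approach the paper intends: the paper dismisses this proposition as a straightforward consequence of Lemma \ref{tob.l.properties}(d) and Lemma \ref{cpe.l.norm-order}, and your argument is precisely the natural filling-in of those details, supplemented by parts (a)--(c) of Lemma \ref{tob.l.properties} and Proposition \ref{tob.p.finrank} for the $\Ell{\infty}(\prY)$-module structure. In particular, your uniform estimate $\abs{T_tf - T_tf_n}_\prY \le \norm{f-f_n}_E\,\car$, obtained from $\sprod{T_tg}{T_tg}_\prY = S_t\sprod{g}{g}_\prY$ and $S_t\car = \car$, is exactly the mechanism that makes Lemma \ref{tob.l.properties}(d) applicable for norm-closedness.
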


With these preparations at hand, we are now in the position
to formulate and prove the main result of this section.

\begin{theorem}\label{cpe.t.cpds}
Let $\prX|\prY$ be an extension of measure-preserving systems. Then
the following assertions hold:
\begin{aufzi}
\item $\dps
\FM_T(\prX|\prY) \subseteq \AP_T(\prX|\prY) \subseteq \{ f\in
\Ell{2}(\prX|\prY) \suchthat T_G f \,\,\text{totally order-bounded}\}$.
\item  The $\Ell{2}$-closures of all three sets in {\rm (a)} coincide
  with $\scrE(\prX|\prY)$.
\item  For each $f\in \Ell{2}(\prX)$:
 \[  f\in \scrE(\prX|\prY) \quad\iff\quad 
\forall\, \delta \in \R_{>0}\,\,
 \exists\, E \in \Sigma_\uY : \,\, \mu_\uY(E^c) \leq
  \delta\,\, \wedge\,\,  \car_E f \in \AP_T(\prX|\prY).
\]
\end{aufzi}
\end{theorem}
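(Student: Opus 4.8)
The plan is to prove (a) directly, to reduce (b) to a single statement about the weakly mixing part (this is where the ambient Hilbert space is used), and to deduce (c) from (a) and (b) by an Egoroff-type localisation. For (a), the second inclusion is immediate from Remark~\ref{tob.r.tob}, since uniform total order-boundedness implies total order-boundedness. For the first inclusion let $f\in\FM_T(\prX|\prY)$, so that $f$ lies in some $G$-invariant finite-rank KH-submodule $E_0\subseteq\Ell{2}(\prX|\prY)$. From $\sprod{T_tf}{T_tf}_\prY=S_t\sprod{f}{f}_\prY$ and the fact that $S_t$ is a unital lattice homomorphism one gets $\abs{T_tf}_\prY=\sqrt{S_t\abs{f}_\prY^2}\le\norm{f}_{\Ell{2}(\prX|\prY)}\car$ for every $t\in G$; hence $T_Gf$ is an order-bounded subset of $E_0$. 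As $E_0$ has finite rank, Proposition~\ref{tob.p.finrank} shows $T_Gf$ to be uniformly totally order-bounded, i.e.\ $f\in\AP_T(\prX|\prY)$.

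For (b), part (a) together with $\scrE(\prX|\prY)=\cl_{\Ell{2}}\FM_T(\prX|\prY)$ yields
\[
\scrE(\prX|\prY)=\cl_{\Ell{2}}\FM_T(\prX|\prY)\subseteq\cl_{\Ell{2}}\AP_T(\prX|\prY)\subseteq\cl_{\Ell{2}}\{f\suchthat T_Gf\text{ totally order-bounded}\},
\]
so, $\scrE(\prX|\prY)$ being $\Ell{2}$-closed, it suffices to show that $T_Gf$ totally order-bounded implies $f\in\scrE(\prX|\prY)$: then the outermost closure is also contained in $\scrE(\prX|\prY)$ and all three closures coincide. Put $\scrE_0:=\ocl(\FM_T(\prX|\prY))$; by Lemma~\ref{cpe.l.norm-order}(c) this is an order-closed $G$-invariant submodule of $\Ell{2}(\prX|\prY)$ equal to $\scrE(\prX|\prY)\cap\Ell{2}(\prX|\prY)$. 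Let $P_0$ be the orthogonal projection onto $\scrE_0$ afforded by the KH-module structure (cf.\ \cite{EHK2024}); it is module-linear, order-contractive ($\abs{P_0g}_\prY\le\abs{g}_\prY$) and, since $T_t$ intertwines the $\A$-action along $S_t$ and $\scrE_0$ is $G$-invariant, also $G$-equivariant. Using $\dprod{\cdot}{\cdot}_{\Ell{2}}=\int_\prY\sprod{\cdot}{\cdot}_\prY$ one checks that $f=P_0f+(f-P_0f)$ is simultaneously the $\Ell{2}$-orthogonal decomposition along $\scrE(\prX|\prY)$, so that $f\in\scrE(\prX|\prY)$ if and only if $(I-P_0)f=0$. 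As $I-P_0$ is order-contractive and $G$-equivariant, Lemma~\ref{tob.l.properties}(f) shows that $g:=(I-P_0)f$ again has totally order-bounded orbit, and (b) is reduced to the assertion about the weakly mixing part $\mathcal{W}:=(I-P_0)\Ell{2}(\prX|\prY)$:
\[
g\in\mathcal{W}\ \text{and}\ T_Gg\ \text{totally order-bounded}\quad\Longrightarrow\quad g=0.
\]

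This implication is the heart of the matter and the place where the surrounding Hilbert space $\Ell{2}(\prX)$ is genuinely needed. The plan is first to pass to the uniform situation: by Proposition~\ref{tob.p.frkhm} the orbit becomes, after multiplication by indicators $\car_A$ of sets $A\in\Sigma_\prY$ of measure arbitrarily close to $1$, uniformly totally order-bounded, hence conditionally precompact in the sense of Proposition~\ref{tob.p.zono}. One then has to rule out nonzero weakly mixing vectors with conditionally precompact orbit---this is a relative de Leeuw--Glicksberg phenomenon. Concretely, a conditionally precompact orbit should force, via a fixed-point/averaging argument carried out in $\Ell{2}(\prX)$ (or via the spectral analysis of an associated conditionally Hilbert--Schmidt operator on $\Ell{2}(\prX\times_\prY\prX)$), a nonzero $G$-invariant finite-rank submodule inside the closed module generated by the orbit; such a submodule lies in $\scrE_0$, whereas $g\perp_\prY\scrE_0$, forcing $g=0$. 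I expect the main difficulties to be the transfer from ``totally'' to ``uniformly'' order-bounded---where the localising set $A$ is transported by the dynamics and so does not directly localise the orbit---and the extraction of the invariant finite-rank submodule.

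Finally, (c) is deduced from (a) and (b). For ``$\Rightarrow$'', fix $f\in\scrE(\prX|\prY)$ and $\delta>0$. Since $\Exp_\prY\abs{f}^2\in\Ell{1}(\prY)$, choose $R$ with $\mu_\prY(E_0^c)\le\delta/2$ for $E_0:=\{\Exp_\prY\abs{f}^2\le R\}$; then $\car_{E_0}f\in\Ell{2}(\prX|\prY)$ and, $\scrE(\prX|\prY)$ being a module, $\car_{E_0}f\in\scrE(\prX|\prY)$. Pick $g_n\in\FM_T(\prX|\prY)$ with $g_n\to\car_{E_0}f$ in $\Ell{2}(\prX)$; the subsequence supplied by Lemma~\ref{cpe.l.norm-order}(b) gives $E_1\in\Sigma_\prY$ with $\mu_\prY(E_1^c)\le\delta/2$ and $\car_{E_1}g_n\to\car_{E_1}\car_{E_0}f$ in the norm of $\Ell{2}(\prX|\prY)$. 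By (a) and Proposition~\ref{cpe.p.ap-module} each $\car_{E_1}g_n$ lies in the norm-closed module $\AP_T(\prX|\prY)$, hence so does $\car_Ef$ for $E:=E_0\cap E_1$, which satisfies $\mu_\prY(E^c)\le\delta$. For ``$\Leftarrow$'', taking $\delta=\tfrac1n$ produces $E_n$ with $\car_{E_n}f\in\AP_T(\prX|\prY)\subseteq\scrE(\prX|\prY)$ (using (b)); since $\norm{f-\car_{E_n}f}_{\Ell{2}(\prX)}^2=\int_\prY\car_{E_n^c}\Exp_\prY\abs{f}^2\to0$ by absolute continuity of the integral and $\scrE(\prX|\prY)$ is $\Ell{2}$-closed, we conclude $f\in\scrE(\prX|\prY)$.
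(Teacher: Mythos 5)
Your parts (a) and (c) are correct and essentially identical to the paper's argument: (a) via order-boundedness of the orbit inside a finite-rank submodule plus Proposition~\ref{tob.p.finrank}, and (c) via truncation by $\set{\Exp_\prY\abs{f}^2 < N}$, Lemma~\ref{cpe.l.norm-order}(b), and norm-closedness of the module $\AP_T(\prX|\prY)$ (Proposition~\ref{cpe.p.ap-module}). Likewise, your reduction of (b) to the single implication
\[
g\in \Ell{2}(\prX|\prY)_{\wm},\ T_G g \ \text{totally order-bounded} \quad\Longrightarrow\quad g=0
\]
matches the paper, which uses the orthogonal decomposition $\Ell{2}(\prX|\prY)=\Ell{2}(\prX|\prY)_{\ds}\oplus\Ell{2}(\prX|\prY)_{\wm}$ from \cite[Thm.~6.9]{EHK2024} and Lemma~\ref{tob.l.properties}(f) applied to the commuting projection.

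However, at exactly this point your proposal stops being a proof: the displayed implication, which you yourself call the heart of the matter, is only a plan (\enquote{a conditionally precompact orbit \emph{should} force \ldots a nonzero $G$-invariant finite-rank submodule}), and the two difficulties you flag are real obstructions to that plan. In particular, the intended first step---upgrading to uniform total order-boundedness via Proposition~\ref{tob.p.frkhm}---does not localise the \emph{orbit} in any $G$-compatible way: $\car_A T_t g$ is not the orbit of $\car_A g$, since multiplication by $\car_A$ does not commute with the dynamics, so the set $\car_A T_G g$ carries no obvious invariance that a de Leeuw--Glicksberg or fixed-point argument could exploit. The paper avoids both difficulties entirely and argues much more directly: assuming $\abs{g}_\prY\le 1$, it uses order-density of $\Ell{\infty}(\prX)$ in $\Ell{2}(\prX|\prY)$ and Lemma~\ref{tob.l.properties}(g) together with order-continuity of the $\Ell{1}(\prY)$-norm to find, for given $\veps$, a \emph{finite} set $F\subseteq\Ell{\infty}(\prX)$ with $\int_\prY \sup_{t\in G}\inf_{h\in F}\abs{T_t g - h}_\prY\le\veps$; then the pointwise estimate $\abs{T_tg}_\prY^2\le \inf_{h\in F}\abs{T_tg-h}_\prY+\sum_{h\in F}\abs{\sprod{T_tg}{h}_\prY}$, integration over $\prY$, and Cauchy--Schwarz give
\[
\norm{g}_2^2 \le \veps + (\#F)^{\frac12}\Bigl(\sum_{h\in F}\bnorm[2]{\sprod{T_tg}{h}_\prY}^2\Bigr)^{\frac12}
\quad\text{for every } t\in G,
\]
and the right-hand infimum over $t$ is $\veps$ by the correlation-vanishing property of the weakly mixing part, \cite[Prop.~8.8]{EHK2024}, since $F\subseteq\Ell{\infty}(\prX)$. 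So no extraction of invariant finite-rank submodules is needed; the weak-mixing input enters only through that one citable proposition. To complete your proof you would either have to carry out your de Leeuw--Glicksberg/Hilbert--Schmidt programme in full (essentially reproving parts of \cite{EHK2024}), or replace your sketch by an argument of this integrated-correlation type.
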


\begin{proof}
(a)\, If $M$ is a $G$-invariant, finite-rank KH-submodule of $\Ell{2}(\prX|\prY)$
and $f\in M$, then the orbit $T_Gf$ is a bounded subset of $M$ and
hence uniformly totally order-bounded, by Proposition \ref{tob.p.finrank}. Hence
$\FM_T(\prX|\prY) \subseteq \AP_T(\prX|\prY)$.
The second inclusion is trivial, as each uniformly totally
order-bounded set is totally order-bounded.

\prfnoi
(c)\, We shall prove the equivalence
\[ 
  f\in \cl_{\Ell{2}}\AP_T(\prX|\prY) \,\iff\, 
\forall\, \delta \in \R_{>0}\,\,
 \exists\, E \in \Sigma_\uY : \,\, \mu_\uY(E^c) \leq
  \delta\,\, \wedge\,\,  \car_E f \in \AP_T(\prX|\prY).
\]
Then (c) follows as soons as we have proved (b) (see below). 

The implication ``$\Leftarrow$'' is straightforward. 
For the converse, let  $f\in \cl_{\Ell{2}}\AP_T(\prX|\prY)$ and
$\delta \in \R_{>0}$. In the first step 
we suppose in addition that $f\in \Ell{2}(\prX|\prY)$. 
By assumption  we find a sequence
$(f_n)_n$ in $\AP_T(\prX|\prY)$ with $f_n \to f$ in $\Ell{2}$.  Since 
$f\in \Ell{2}(\prX|\prY)$, by
Lemma \ref{cpe.l.norm-order} 
we may pass to a subsequence
$(h_n)_n$ such that there
is a subset $E\in \Sigma_\prY$ with $\mu_\prY(E^c) < \delta$ 
and $\car_E h_n \to \car_Ef$ in the norm of
$\Ell{2}(\prX|\prY)$. Since $\AP_T(\prX|\prY)$ is a norm-closed
submodule of $\Ell{2}(\prX|\prY)$, it follows that 
$\car_E f\in \AP_T(\prX|\prY)$ as required.  

Finally, for general $f\in \cl_{\Ell{2}}\AP_T(\prX|\prY)$ 
we can find $B\in \Sigma_\prY$ such that $\car_B f\in \Ell{2}(\prX|\prY)$
and $\mu_\prY(B^c) < \frac{\delta}{2}$. 
(For example, let $B \coloneqq \set{ \Exp_\prY\abs{f}^2 <
  N}$ for $N> 0$ large.) Since $\AP_T(\prX|\prY)$ is a 
$\Ell{\infty}(\prY)$-module, $\car_B f \in
\cl_{\Ell{2}}\AP_T(\prX|\prY)$, and we can apply what we have already
proved (with $\delta$ replaced by $\frac{\delta}{2}$). This yields a set
$C \in \Sigma_\prY$ with $\mu_\prY(C)< \frac{\delta}{2}$ such that 
$\car_{B\cap C}f = \car_C(\car_B f) \in \AP_T(\prX|\prY)$. Taking
$E := B \cap C$ then yields what is desired.

\prfnoi
(b)\ By the inclusions in (a) it suffices to establish the implication
\[ f\in \Ell{2}(\prX|\prY),\, T_G f \,\,\text{totally order-bounded}
\quad\dann\quad f\in \scrE(\prX|\prY).
\]
By Theorem 6.9 in  \cite{EHK2024} we may write
\[\Ell{2}(\prX|\prY) = 
\Ell{2}(\prX|\prY)_{\ds} \oplus \Ell{2}(\prX|\prY)_{\wm} 
\] 
as an orthogonal sum of KH-modules, 
where $\Ell{2}(\prX|\prY)_{\ds} = \ocl\bigl(\FM_T(\prX|\prY)\bigr)
\subseteq \scrE(\prX|\prY)$. 

Let $Q$ denote the orthogonal projection onto the 
KH-submodule $\Ell{2}(\prX|\prY)_{\wm}$. Then $QT_t = T_tQ$ for each
$t\in G$, and hence $T_GQf = QT_G f$ is totally order-bounded
whenever $T_G f$ is, 
by Lemma \ref{tob.l.properties}(f). Therefore,  we are reduced to show
\[   f\in \Ell{2}(\prX|\prY)_{\wm},\, T_G f \,\,\text{totally order-bounded}
\quad\dann\quad f =0.
\]  
In order to do this, pick $f$ satisfying the stated hypotheses.
In addition, we may and do suppose that $\abs{f}_\prY \le 1$.

Fix $\veps \in \R_{> 0}$. Since $\Ell{\infty}(\prX)$
is order-dense in $\Ell{2}(\prX|\prY)$, by
Lemma \ref{tob.l.properties}(g)  
the net 
\[  F \mapsto \sup_{t\in G} \inf_{g\in F} \abs{T_t f - g}_\prY 
\]
decreases to $0$, where $F$ ranges over the finite subsets of
$\Ell{\infty}(\prX)$. Since the $\Ell{1}(\prY)$-norm 
is order-continuous \cite[Sec.\,7.2]{EFHN}, there is a finite set 
$F \subseteq \Ell{\infty}(\prX)$ with 
\[  \int_\prY \sup_{t\in G} \inf_{g\in F} \abs{T_t f - g}_\prY  \le
\veps.
\]
Now for $t\in G$ and $h\in F$ we certainly have
\[ \abs{T_tf}_\prY^2
\le  \abs{ \sprod{T_t f}{T_tf - h}_\prY } 
+ \abs{ \sprod{T_t f}{h}_\prY } 
\le  \abs{T_tf - h}_\prY + \sum_{g\in F} \abs{\sprod{T_t f}{g}_\prY},
\]
and taking the infimum over $h$ we arrive at
\[ \abs{T_tf}_\prY^2 
\le \inf_{g\in F} \abs{T_t f - g}_\prY + 
\sum_{g\in F} \abs{\sprod{T_t f}{g}_\prY}
\le \sup_{s\in G} \inf_{g\in F} \abs{T_s f - g}_\prY
+ 
\sum_{g\in F} \abs{\sprod{T_t f}{g}_\prY}.
\] 
Integrating over $\prY$ we find, with $n := \#F$ being the
cardinality of $F$, 
\[ \norm{f}_2^2 = \norm{T_t f}_2^2 
= \int_\prY \abs{T_t f}_\prY^2 \le \veps + 
\sum_{g\in F} \norm{ \sprod{T_t f}{g}_\prY }_1 
\le \veps + n^\frac{1}{2} 
\bigl(\sum_{g\in F} \norm{ \sprod{T_t f}{g}_\prY }_2^2 \bigr)^\frac{1}{2} 
\]
for each $t\in G$. Hence, it suffices to prove
\[  \inf_{t\in G} \sum_{g\in F} \norm{ \sprod{T_t f}{g}_\prY }_2^2 = 0.
\]
But this is true by \cite[Prop.\,8.8]{EHK2024}, since  $F\subseteq \Ell{\infty}(\prX)$.
\end{proof}

\medskip
As a corollary we obtain the announced characterization
of extensions with discrete spectrum.

\begin{corollary}\label{cpe.c.cpds}
  Let $\prX|\prY$ be an extension of measure-preserving systems. Then the following assertions are equivalent.
\begin{aufzii}
\item The extension $\prX|\prY$  has discrete spectrum.\label{compactisometric1}
\item The extension $\prX|\prY$ is compact,
                          i.e., the space $\AP_T(\prX|\prY)$ is dense
                          in $\Ell{2}(\prX)$.\label{compactisometric2}

\item The set $\{ f\in \Ell{2}(\prX|\prY) \suchthat 
\textrm{$T_Gf$ is totally order-bounded}\}$ is dense
in $\Ell{2}(\prX)$.

\item For every $f \in \Ell{2}(\uX)$ and every $\delta \in \R_{>0}$
  there is a measurable set $E \in \Sigma_\uY$ with $\mu_\uY(E^c) \leq
  \delta$ such that $\car_E f \in \AP_T(\prX|\prY)$.
\label{compactisometric5}
		\end{aufzii}
\end{corollary}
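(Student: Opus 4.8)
The plan is to read off all four equivalences directly from Theorem \ref{cpe.t.cpds}, since the substantive analytic work is already done there and the corollary merely repackages parts (b) and (c). The unifying observation is that each of (i)--(iv) is a statement about the Kronecker subspace $\scrE(\prX|\prY)$, and that condition (i) is, by definition, the assertion $\scrE(\prX|\prY) = \Ell{2}(\prX)$. Thus the strategy is to show that (ii), (iii), and (iv) are each equivalent to $\scrE(\prX|\prY) = \Ell{2}(\prX)$.

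For (i) $\Leftrightarrow$ (ii) and (i) $\Leftrightarrow$ (iii) I would invoke Theorem \ref{cpe.t.cpds}(b). A subset of $\Ell{2}(\prX)$ is dense precisely when its $\Ell{2}$-closure is all of $\Ell{2}(\prX)$. By (b), the $\Ell{2}$-closures of $\AP_T(\prX|\prY)$ and of $\{f\in\Ell{2}(\prX|\prY) \suchthat T_Gf \text{ totally order-bounded}\}$ both coincide with $\scrE(\prX|\prY)$. Hence $\AP_T(\prX|\prY)$ is dense in $\Ell{2}(\prX)$ if and only if $\scrE(\prX|\prY) = \Ell{2}(\prX)$, which is (i); the identical argument yields (iii) $\Leftrightarrow$ (i).

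For (i) $\Leftrightarrow$ (iv) I would read off Theorem \ref{cpe.t.cpds}(c). The condition displayed in (iv) is exactly the universal quantification over $f \in \Ell{2}(\prX)$ of the right-hand side of the biconditional in (c), and by (c) that right-hand side holds for a given $f$ precisely when $f \in \scrE(\prX|\prY)$. Therefore (iv) holds if and only if every $f \in \Ell{2}(\prX)$ lies in $\scrE(\prX|\prY)$, that is, if and only if $\scrE(\prX|\prY) = \Ell{2}(\prX)$, which once more is (i).

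I do not anticipate a genuine obstacle: once Theorem \ref{cpe.t.cpds} is available, the corollary is just a matter of unwinding the definition of density and the equivalence recorded in (c). The only point requiring a moment's care is to confirm that ``dense in $\Ell{2}(\prX)$'' is meant in the $\Ell{2}$-topology, which is exactly the closure for which (b) is phrased; in any case Lemma \ref{cpe.l.norm-order}(c) assures us that for the submodules at hand the $\Ell{2}$-closure and the order-closure agree, so no ambiguity arises.
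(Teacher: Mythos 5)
Your proof is correct and is precisely the argument the paper intends: the corollary carries no separate proof because it is read off from Theorem~\ref{cpe.t.cpds}, with (b) giving (i)$\Leftrightarrow$(ii)$\Leftrightarrow$(iii) via the identification of the $\Ell{2}$-closures with $\scrE(\prX|\prY)$, and (c) giving (i)$\Leftrightarrow$(iv) by universal quantification over $f$. Your closing remark that Lemma~\ref{cpe.l.norm-order}(c) resolves any ambiguity between closures is a sensible extra precaution, though not needed since density in the corollary is stated in $\Ell{2}(\prX)$ directly.
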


\begin{remark}\label{cpe.r.cpe-KH-char?}
By Lemma \ref{cpe.l.norm-order}
 we can reformulate assertion (ii)
in Corollary \ref{cpe.c.cpds} as 
\begin{enumerate}
\item[(ii)'] {\em The set $\AP_T(\prX|\prY)$ is order-dense
in $\Ell{2}(\prX|\prY)$.}
\end{enumerate}
And (iii) can be reformulated as
\begin{enumerate}
\item[(iii)'] {\em The set $\{ f\in \Ell{2}(
\prX|\prY) \suchthat \text{$T_Gf$ is totally order-bounded}\}$
is order-dense in $\Ell{2}(\prX|\prY)$.}
\end{enumerate}
Since also (i) can be reformulated entirely in terms
of KH-modules, the equivalence of (i), (ii), and (iii) could
actually be formulated for arbitrary KH-dynamical systems. 
However, whereas the implications (i)$\dann$(ii)$\dann$(iii)
are still valid in this more general situation, 
we do not know whether this is also true for the
implication (ii)$\dann$(i). Our proof 
of Corollary \ref{cpe.c.cpds} hinges on the
Birkhoff--Alaoglu theorem, which refers to the global
$\Ell{2}$-dynamics, a feature not present in the
abstract framework of KH-dynamical systems.
\end{remark}

\section{Relative Cyclical Compactness}\label{s.rcc}

In this  section we shall
relate the notions of total order-boundedness and
{\em relative cyclical compactness} as defined in \cite{Kusr2000}. 
 For this we boldly assume the reader to be familiar with the relevant notions.

\bigskip

As before, let  $E$  be a lattice-normed space
over the Stone algebra $\A$. We let $\B$ be the
complete Boolean algebra of idempotents on $\A$ and freely
identify $\A$ with $\Ce(\Om)$ and $\B$ with $\Clop(\Om)$, where $\Om$ is the
(extremally disconnected) Gelfand space of $\A$. 

There is a canonical $\B$-set structure\footnote{For definition of a $\B$-set see \cite[{}A.12]{Kusr2000}. Note, however, 
that Kusraev uses the symbol ``$\Bv{x=y}$'' for something
different than we do.}  on $E$ defined by 
\[ \Bv{x\neq y} := \supp(\abs{x-y}),\qquad \Bv{x=y} := 
\Bv{x\neq y}^c\qquad  (x,y \in E).
\]
Here we employ the notion of the {\em support} of
an element of $\A$ as defined in  \cite[Section 2.1]{EHK2024}. It was mentioned there that under the identification $\A = \Ce(\Om)$
one has $\Bv{x=y} =  \car - \supp(\abs{x-y}) = \car_{\set{ \abs{x -y} = 0}^\circ }$. 

In this terminology, Kusraev's axioms for a $\B$-set read $(x,y,z\in E)$: 
\[  \Bv{x=y} = \Bv{y=x}, \quad \Bv{x=y}= \car \,\,\gdw\,\,
x=y,\quad \Bv{x=y}\Bv{y=z} \le \Bv{x=z}.
\]
Observe that one can take $E = \A$ here. 

Suppose $F,G$ are $\B$-sets, then a mapping 
$f: F\to G$ is a {\emdf $\B$-set map} (a ``non-expanding'' map in
Kusraev's terminology) if
\[  \Bv{x = y} \le \Bv{f(x)=f(y)} \qquad (x,y\in F).
\]
It is easy to see that for $z\in E$ the  mapping 
\[   E \to \A,\qquad x\mapsto \abs{x-z},
\]
is a  $\B$-set map.

A family  $(p_\alpha)_\alpha$ in $\B$  is a {\emdf partition of unity}
if 
\[  p_\alpha \wedge p_\beta = 0 \quad \text{whenever $\alpha \neq
  \beta$}\quad \text{and}\quad \bigvee_\alpha p_\alpha = \car.
\]
Note that we allow $p_\alpha =0$ in this definition. We recall the following (well-known) result.

\begin{lemma}[Exhaustion Principle/Disjointification]\label{rcc.l.disjoint}
Let $(t_\alpha)_\alpha$  be any family in $\B$ with $\bigvee_\alpha
t_\alpha = \car$. Then there is a partition of unity
$(p_\alpha)_\alpha$
with $p_\alpha \le t_\alpha$ for each $\alpha$.  
\end{lemma}

\begin{proof}
Let $E := \{ e\in \B \suchthat \exists\,\alpha: e \le
t_\alpha\}$ and let $S\subseteq E$ be a maximal antichain, i.e. a
maximal subset of pairwise disjoint elements. (This exists by a
standard application of Zorn's lemma.) By maximality, each upper bound
of $S$ is also an upper bound of all the $t_\alpha$, hence
$\bigvee S = \car$.   

For each $s\in S$ let
$\alpha(s)$ be an index with  $s\le t_{\alpha(s)}$. (This is a
direct application of the Axiom of Choice.) Now define
\[   p_\alpha := \bigvee\{ s \suchthat \alpha(s) = \alpha\}
\]
for each index $\alpha$ (with $\bigvee \leer = 0$ as usual). Then it is routine to verify that $(p_\alpha)_\alpha$ has the required properties.

\prfnoi
Alternative proof (sketch):
By the well-ordering principle we may suppose that the index set is well-ordered. Define
\[ q_\alpha :=  \bigvee \{ t_\beta \suchthat
\beta < \alpha \} \quad \text{and}\quad 
  p_\alpha := t_\alpha \wedge q_\alpha^c
\]
for each $\alpha$. 
Then, clearly, the $p_\alpha$ are pairwise disjoint. 
Using the defining property of a well-ordering one shows that 
\[  t_\alpha \le \bigvee_{\beta \le \alpha} p_\alpha
\]
is true for all $\alpha$, and hence $\bigvee_\alpha p_\alpha = \car$
as desired.


\prfnoi
Third proof (sketch): Let 
\[ \calM := 
  \{(s_\alpha)| \forall \alpha: \B \ni s_{\alpha} \leq t_{\alpha}\,\, \text{and}\,\, \forall \alpha, \beta: \alpha \neq \beta \Rightarrow s_{\alpha} \wedge s_{\beta} = 0\}.
\]
The set $\calM$ is partially ordered (componentwise). If 
$(p_\alpha)_\alpha$ is a maximal element of $\calM$ (which exists by
Zorn's lemma) then it satisfies the requirements.
\end{proof}

\begin{remark}
The first step in the (first) proof of Lemma \ref{rcc.l.disjoint} is 
Corollary (1) of
Kusraev's {\em exhaustion principle}, see
\cite[{}1.1.6]{Kusr2000}. 
Although not stated explicitly, the lemma is applied a couple of times
in \cite{Kusr2000}
(e.g. in the proof of Thm.{ }5.3.6). And the remaining
arguments are given in the proof of Thm.{ }8.1.8. 

The second proof is the transfinite version of the usual
``disjointification'' procedure known from elementary measure theory
courses. One can find it, basically, in Sikorski's book \cite[Thm.{ }2.20.2]{SikorskiBA}.   
\end{remark}

Let $(p_\alpha)_\alpha$ be a partition of unity
 in $\B$ and $(x_\alpha)_\alpha$
a  family (over the same index set) of elements in $E$. An element $x\in E$ is called the {\emdf
  mixing} of $(x_\alpha)_\alpha$ over $(p_\alpha)_\alpha$ if 
\[    p_\alpha \le \Bv{x = x_\alpha} \quad \text{for all $\alpha$}.
\]
The mixing element $x$ is uniquely determined by this condition and one writes
$x = \sum_\alpha p_\alpha x_\alpha$. 

Mixings are preserved under $\B$-set maps. In the particular case
of the $\B$-set map $x\mapsto \abs{x-z}$ from $E$ to $\A$, this means
\[ x = \sum_\alpha p_\alpha x_\alpha \quad \dann\quad  
\abs{z- x} = \sum_{\alpha} p_\alpha \abs{x_\alpha - z}.
\]
Of course, a mixing need not exist. 
A subset $M$ of $E$ is called {\emdf mix-complete} if every mixing of
elements of $M$
(i.e., each mixing of any family in $M$ over any partition of unity of
$\B$) exists in $M$. And it is called
{\emdf $\B$-cyclic} (by Kusraev \cite[{}7.3.3]{Kusr2000}) or {\emdf boundedly mix-complete} (by us) 
if every mixing of any {\em bounded} family in $M$ exists in $M$.
If mixings exist in $M$ just for all {\em finite} partitions of unity, 
the set $M$ is called {\emdf finitely mix-complete}.

Note that, in the expression $x= \sum_\alpha p_\alpha x_\alpha$, the
sum 
is to be understood in a  purely formal way,
just expressing that $x$ is the mixing of the bounded family
$(x_\alpha)_\alpha$ over the partition of unity
$(p_\alpha)_\alpha$. However, if $E$ is a lattice-normed {\em module}
one may interpret the sum as the  net of partial sums
\[  F \mapsto \sum_{\alpha \in F} p_\alpha x_\alpha \qquad (F\subseteq
\Lambda \,\text{finite}),
\]
and the identity $x= \sum_\alpha p_\alpha x_\alpha$ 
as telling that this net is
order-convergent to $x$. In particular, each lattice-normed module is
finitely mix-complete, and each Kaplansky--Banach module is boundedly mix-complete
(=$\B$-cyclic).

The following characterization links all these notions.

\begin{theorem}\label{rcc.t.KBM-char}
For a lattice-normed
space $E$ over a Stone algebra $\A$ the following assertions are equivalent:
\begin{aufzii}
\item $E$ is finitely mix-complete and order-complete;
\item $E$ is boundedly mix-complete (=$\B$-cyclic) and norm-complete;
\item $E$ is a Kaplansky-Banach module over $\A$ (for some (unique)
  multiplication $\A \times E \to E$);

\item $E$ is a Banach--Kantorovich space (=decomposable, order-complete, lattice-normed space {\rm \cite[2.2.1]{Kusr2000}});
\item $E$ is disjointly decomposable and order-complete.
\end{aufzii}
\end{theorem}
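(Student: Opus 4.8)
The plan is to prove the five conditions equivalent via the single cycle (iii)\,$\dann$\,(ii)\,$\dann$\,(i)\,$\dann$\,(v)\,$\dann$\,(iv)\,$\dann$\,(iii), separating at the outset the elementary links from the two or three genuinely substantive ones. Before starting the cycle I would record one general fact about a lattice-normed space over a Stone algebra: order-completeness always entails norm-completeness. Indeed, a norm-Cauchy sequence $(x_n)$ is order-Cauchy because $\abs{x_n-x_m}\le\norm{x_n-x_m}\car$; its order-limit $x$ then satisfies $\abs{x_n-x}\le c_n\car$ with $c_n:=\sup_{m\ge n}\norm{x_n-x_m}\to 0$, by order-continuity of the modulus, so that $x_n\to x$ in norm as well. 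The reverse implication fails in general, and repairing it is exactly where the mixing hypotheses enter.

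The elementary links are as follows. For (iii)\,$\dann$\,(ii) there is nothing to prove: a Kaplansky--Banach module is boundedly mix-complete (as recorded just before the statement) and norm-complete by definition. For the mixing part of (ii)\,$\dann$\,(i), bounded mix-completeness gives finite mix-completeness because any finite family is order-bounded in the order-complete algebra $\A$, so a mixing over a finite partition is a special bounded mixing. For the decomposition part of (i)\,$\dann$\,(v), finite mix-completeness yields disjoint decomposability at once: if $\abs{x}=a_1+a_2$ with $a_1\perp a_2$ and $p:=\supp(a_1)\in\B$, then the mixings $x_1$ of $(x,0)$ and $x_2$ of $(0,x)$, each over the partition $(p,p^c)$, exist and satisfy $\abs{x_1}=a_1$, $\abs{x_2}=a_2$ and $x_1+x_2=x$. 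Order-completeness is common to (i) and (v), and (iv)\,$\dann$\,(v) is trivial since disjointness is the special case $a_1\perp a_2$ of the full decomposition property.

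The substantive content sits in three steps, which I expect to be the main obstacle. The most delicate is the completeness half of (ii)\,$\dann$\,(i): that norm-completeness together with bounded mix-completeness forces order-completeness. Here I would take an order-bounded, order-Cauchy net $(y_\beta)$, form its components $p\,y_\beta$ (which exist as mixings of $(y_\beta,0)$ over $(p,p^c)$), and run a maximal/exhaustion argument over the Gelfand space $\Om$: using the extremal disconnectedness of $\Om$ and the order-bound $u_\gamma\downarrow 0$, one produces a partition of unity $(p_\alpha)$ in $\B=\Clop(\Om)$ on each band of which the net is norm-Cauchy, hence norm-convergent by norm-completeness to some $z_\alpha$ supported on $p_\alpha$; bounded mix-completeness then glues these to $z=\sum_\alpha p_\alpha z_\alpha\in E$, and one verifies band-by-band that $y_\beta\to z$ in order. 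The second substantive step is (v)\,$\dann$\,(iv), the upgrade from disjoint to arbitrary decompositions, obtained by writing a general splitting of $\abs{x}=a_1+a_2$ as an order-limit of disjoint ones and using order-completeness to secure the limit in $E$. The third is (iv)\,$\dann$\,(iii), the construction of the unique multiplication $\A\times E\to E$: one sets $px$ for $p\in\B$ via the decomposition, extends $\A$-linearly to the norm-dense subalgebra of step elements of $\A=\Ce(\Om)$, and passes to general $a\in\A$ by norm-limits, using the estimate $\abs{ax}\le\abs{a}\,\abs{x}$ together with norm-completeness (available from the opening remark); the Kaplansky--Banach axioms and uniqueness follow from density and norm-continuity.

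The hard part, and the one I would actually write out rather than import, is the exhaustion-plus-mixing argument in (ii)\,$\dann$\,(i): it is the place where the Stonean structure of $\Om$ and bounded mix-completeness genuinely cooperate, and where a careless gluing risks producing only a norm limit, or an element not lying in $E$, rather than a true order limit. The remaining two substantive links, (v)\,$\dann$\,(iv) and (iv)\,$\dann$\,(iii), are standard Banach--Kantorovich and $\B$-cyclic theory; I would cite \cite{Kusr2000} for them once the dictionary between our mixing terminology and Kusraev's notions of $\B$-cyclicity and decomposability has been made explicit.
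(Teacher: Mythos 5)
Your cycle (iii)\,$\dann$\,(ii)\,$\dann$\,(i)\,$\dann$\,(v)\,$\dann$\,(iv)\,$\dann$\,(iii) is sound in structure, your explicit argument for (i)\,$\dann$\,(v) is correct (it is exactly the mirror image of the one construction the paper writes out, namely (v)\,$\dann$\,(i)), and importing (v)\,$\dann$\,(iv) and (iv)\,$\dann$\,(iii) from \cite{Kusr2000} is legitimate. The genuine gap sits in the one step you announce you would write out yourself: (ii)\,$\dann$\,(i). Your plan needs a partition of unity $(p_\alpha)$ in $\B$ such that the order-Cauchy net $(y_\beta)$ is \emph{norm}-Cauchy on every band $p_\alpha$; equivalently, the regulator $u_\gamma\searrow 0$ must converge to $0$ uniformly on each $p_\alpha$. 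This Egoroff-type property is available when $\A=\Ell{\infty}(\prY)$ for a probability space (that is exactly what Proposition \ref{tob.p.frkhm} exploits), but it fails for general Stone algebras, because it amounts to a weak distributivity property of the Boolean algebra $\B$ that, e.g., the Cohen algebra does not have. Concretely, let $\B=\mathrm{Borel}(\omega^\omega)/\mathrm{meager}$ and $\A=\Ce(\Om)$ its Stone algebra, and let $u_N\in\A_+$ correspond to the function
\[ u_N(x) \;=\; \bigl(1+\max_{k<N}x(k)\bigr)^{-1} \qquad (x\in\omega^\omega). \]
Then $u_N\searrow 0$ in order, since for each $\delta>0$ the sets $\{u_N\ge\delta\}$ decrease to a compact, hence nowhere dense, subset of $\omega^\omega$. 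But on no nonzero band does $(u_N)_N$ converge uniformly: a non-meager $Q$ with the Baire property is comeager in some cylinder $[t]$, $t\in\omega^L$, and for $m>\max_{k<L}t(k)$ the set of $x\in[t]$ whose first $N$ entries are all $\le m$ is a nonempty clopen set meeting $Q$ non-meagerly, so $\norm{\car_Q\, u_N}_\A\ge (1+m)^{-1}$ for every $N$. Taking $E=\A$ and $y_N=u_N$ (order-Cauchy with regulator $(u_N)$) shows that the partition your argument relies on simply does not exist: if $(\car_Q\, y_N)_N$ were norm-Cauchy, its norm limit would coincide with its order limit $0$, contradicting the lower bound.

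The implication itself is of course true; the repair is to interchange the order of ``mix'' and ``take limits'', which is how Kusraev proves it and what the paper imports wholesale as \cite[Thm.\ 2.3.3]{Kusr2000}. For each fixed $\veps=\tfrac1n$, the exhaustion principle gives a partition $(p_\alpha)$ and indices $\beta_\alpha$ with $p_\alpha\abs{y_\beta-y_{\beta_\alpha}}\le\veps\car$ for all $\beta\ge\beta_\alpha$; bounded mix-completeness lets you form the single element $z_n:=\sum_\alpha p_\alpha\, y_{\beta_\alpha}\in E$ \emph{first}. A common refinement of the two partitions belonging to $z_n$ and $z_m$ (for two partitions this is unproblematic in any complete Boolean algebra) yields $\abs{z_n-z_m}\le(\tfrac1n+\tfrac1m)\car$, so $(z_n)_n$ is norm-Cauchy and has a norm limit $z$ by (ii). Finally, for $\beta\ge\beta_\gamma$ one gets $\abs{y_\beta-z}\le u_\gamma+(\tfrac1n+\norm{z_n-z}_E)\car$ for every $n$, hence $\abs{y_\beta-z}\le u_\gamma$, i.e.\ $y_\beta\to z$ in order with the original regulator. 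So either rewrite (ii)\,$\dann$\,(i) along these lines (mixings of net elements, then one norm limit, never band-wise norm limits), or simply cite \cite[Thm.\ 2.3.3]{Kusr2000} for this step, as the paper itself does.
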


\begin{proof}
The equivalence of (i) and (ii) is basically \cite[Thm. 2.2.3]{Kusr2000}; the implication (i),(ii)$\dann$(iii) follows from \cite[{}2.1.8]{Kusr2000}; (iii)$\dann$(iv) holds since
each Kaplansky--Banach module is decomposable (in the sense of \cite[{}2.1.1]{Kusr2000}); (iv)$\dann$(v) is trivial. 

We prove the implication (v)$\dann$(i). Suppose that 
$p \in \B$ and $x_1, x_2 \in E$. For each $j=1,2$ we can write
$\abs{x_j} = p\abs{x_j} + p^c\abs{x_j}$. 
Disjoint decomposability
yields elements $u_j, v_j\in E$ with
$x_j = u_j + v_j$ and $\abs{u_j} = p\abs{x_j}$, $\abs{v_j}= p^c\abs{x_j}$. Define $x := u_1 + v_2$. Then it follows
\[  \Bv{x = x_1} \ge p\quad \text{and}\quad \Bv{x= x_2} \ge p^c
\]
and hence $x = px_1 + p^cx_2$ in the sense of mixings.
\end{proof} 

Let again $E$ be a lattice-normed space  over $\A$.
The {\emdf mix-closure} of $M\subseteq E$ in $E$  is 
\[ \mix(M) = \Bigl\{ x\in E \suchthat \exists 
(p_m)_{m\in M} \,\, \text{partition of unity} : x = \text{${\sum}_{m\in M}$} p_m m\Bigr\}.
\]
And $M$  is {\emdf  mix-closed}  in $E$ if $\mix(M) = M$. It is straightforward
to prove that $\mix(\mix(M)) = \mix(M)$, and hence $\mix(M)$ is
mix-closed whatever $M$ is. 

\medskip
We are now approaching the main result of this section. Formulating it
requires the notion of a {\emdf (relatively) cyclically compact} subset, see
\cite[{}8.5.1]{Kusr2000}. However, we shall not work 
with the original definition, but with the following equivalent characterization
provided by \cite[Theorem 8.5.2]{Kusr2000}.

\begin{lemma}\label{rcc.l.rcc-char}
Let $E$ be a Kaplansky--Banach module. A mix-complete subset  $M \subseteq E$ 
is relatively cyclically compact in $E$ iff it has the following property:

\smallskip
For each $\veps \in \R_{> 0}$ there is a countable
partition of unity $(q_n)_n$ in $\B$ and a sequence 
$(F_n)_n$ of finite subsets of $E$ such that 
for each $x\in M$ there is $z_n\in \mix(F_n)$  
with $q_n \abs{x- z_n} \le \veps\car$.
\end{lemma}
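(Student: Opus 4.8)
The plan is to derive the statement directly from Kusraev's own characterization of relatively cyclically compact sets, so that the only genuine work is a translation of notation. First I would check that Kusraev's framework applies to $E$: being a Kaplansky--Banach module, $E$ is in particular a Banach--Kantorovich space over $\A$ in the sense of \cite[2.2.1]{Kusr2000} (cf.\ the implication (iii)$\,\dann\,$(iv) of the preceding theorem), so that his theory of cyclically compact subsets is available. Under the fixed identifications $\A = \Ce(\Om)$ and $\B = \Clop(\Om)$, our lattice-norm $\abs{\cdot}$ is exactly Kusraev's $\A$-valued norm, and our notions of partition of unity, mixing $\sum_\alpha p_\alpha x_\alpha$ and mix-closure $\mix(F)$ coincide with his. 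The mix-completeness of $M$ is precisely the standing hypothesis of \cite[Theorem 8.5.2]{Kusr2000}.

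The substance is \cite[Theorem 8.5.2]{Kusr2000} itself, which I would invoke rather than reprove. It states that a mix-complete $M$ is (relatively) cyclically compact exactly when, for every $\veps \in \R_{>0}$, it admits a \emph{cyclic $\veps$-net}, i.e.\ a countable partition of unity $(q_n)_n$ together with finite sets $F_n \subseteq E$ such that each $x \in M$ lies within $\veps$ of $\mix(F_n)$ on the piece $q_n$. The next step is therefore to verify that Kusraev's ``within $\veps$ on the piece $q_n$'' unwinds to the inequality $q_n\abs{x - z_n} \le \veps\car$ for a suitable $z_n \in \mix(F_n)$. In $\Ce(\Om)_+$ the relation $t \le \veps\car$ restricted to a clopen set amounts to $\norm{t} \le \veps$ there, and $q_n\abs{x-z_n}$ is exactly the localisation of the distance $\abs{x-z_n}$ to the clopen set carrying $q_n$; so the two formulations agree verbatim.

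The only point demanding care — the one I would treat as the main obstacle — is the bookkeeping around Kusraev's differing use of the bracket symbol, flagged in the footnote above: where we write $\Bv{x \neq y} = \supp(\abs{x-y})$, his ``$\Bv{x=y}$'' denotes something else, and every statement about how mixings interact with the localized norm must be read through this support dictionary. Once the dictionary is fixed, the cyclic $\veps$-net of \cite[Theorem 8.5.2]{Kusr2000} is word-for-word the displayed condition, and the mix-completeness of $M$ guarantees that the approximating elements $z_n$ genuinely lie in $\mix(F_n) \subseteq E$. No analytic ingredient beyond Kusraev's theorem enters; in particular, the completeness required for the (harder) converse implication is supplied by $E$ being norm-complete and boundedly mix-complete as a Kaplansky--Banach module.
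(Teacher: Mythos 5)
Your overall route is the paper's: reduce everything to \cite[Theorem 8.5.2]{Kusr2000}. But there is a genuine gap, and it sits exactly where you claim there is none. You assert that Kusraev's theorem states the characterization with finite sets $F_n \subseteq E$. It does not: his formulation requires the $F_n$ to be subsets of $M$ itself. Since the lemma's hypothesis allows $F_n \subseteq E$, it is a \emph{weaker} hypothesis than Kusraev's (every net inside $M$ is in particular one inside $E$), so the lemma is a \emph{stronger} sufficiency criterion and cannot be obtained by citing the statement of \cite[Theorem 8.5.2]{Kusr2000} as a black box. The discrepancy is not cosmetic: in the intended application (Proposition \ref{rcc.p.tob-rcc}, (i)$\dann$(ii)) the finite sets produced are mixings of elements of $\Ball_E[0;2r]$, which have no reason to lie in $M$, so the $F_n \subseteq E$ form is what is genuinely needed. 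The paper resolves this by appealing to Kusraev's \emph{proof} rather than his statement: the argument for the sufficiency direction never uses that the $F_n$ lie in $M$, and goes through verbatim for $F_n \subseteq E$.

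To repair your proof you must either make that inspection of Kusraev's proof explicit, or insert a bridging argument showing that the $F_n \subseteq E$ property implies the $F_n \subseteq M$ property at the cost of doubling $\veps$: for each $y_j \in F_n$, the projections $\pi$ for which some $x \in M$ satisfies $\pi\abs{x - y_j} \le \veps\car$ form a family closed under passing to smaller projections; by the exhaustion principle \cite[{}1.1.6]{Kusr2000} choose a disjoint family $(\pi_\alpha)$ with supremum $b_j$ and witnesses $x_\alpha \in M$, and use mix-completeness of $M$ to form $m_j \in M$ with $b_j\abs{m_j - y_j} \le \veps\car$; then every $x \in M$ is $2\veps$-close on the relevant pieces to a mixing of $\{m_1, \dots, m_d\} \subseteq M$, and Kusraev's stated theorem applies. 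Neither step is in your write-up. Instead you single out the $\Bv{x=y}$ notation dictionary as ``the main obstacle''; that bookkeeping is harmless and is not where the difficulty lies. (A minor additional slip: you invoke completeness for ``the (harder) converse implication'', but the lemma asserts only the sufficiency direction; no converse is claimed.)
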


\begin{proof}
See \cite[Theorem 8.5.2]{Kusr2000} and its proof. The original formulation in \cite[{}8.5.2]{Kusr2000} requires the finite sets $F_n$ to be subsets of $M$ itself. However, the proof literally
yields only $F_n \subseteq E$ and not $F_n \subseteq M$.
\end{proof}

\begin{proposition}\label{rcc.p.tob-rcc}
Let $E$ be a Kaplansky--Banach module over a Stone algebra $\A$. Then
for $M \subseteq E$ the following assertions are equivalent.
\begin{aufzii}
\item $M$ is totally order-bounded.
\item $M$ is bounded and $\mix(M)$ is relatively cyclically compact.
\end{aufzii}
\end{proposition}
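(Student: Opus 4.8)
The plan is to prove the two implications separately, translating between the order-theoretic language of Definition~\ref{tob.d.tob} and Kusraev's partition-of-unity characterization from Lemma~\ref{rcc.l.rcc-char}. Throughout I would use the identification $\A = \Ce(\Om)$ with $\Om$ extremally disconnected, so that idempotents are clopen sets and infima of bounded families over $\Om$ are computed pointwise-then-regularized. The key technical bridge is that a statement of the form $\inf_{y\in F}\abs{x-y}\le u$ can be ``localized'': if $F=\{y_1,\dots,y_d\}$ is finite, then $\inf_{y\in F}\abs{x-y}\le u$ holds if and only if $\Om$ can be partitioned into clopen pieces $p_1,\dots,p_d$ with $p_j\abs{x-y_j}\le p_j u$ on each piece. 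This is precisely the mechanism already used in the proof of Proposition~\ref{tob.p.zono}, and it is what turns a single finite set $F$ into a mixing $z\in\mix(F)$.

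For the implication \textup{(i)}$\dann$\textup{(ii)}: assume $M$ is totally order-bounded. Boundedness is immediate from Remark~\ref{tob.r.tob}. First I would note that $\mix(M)$ is again totally order-bounded and bounded: indeed, if $x=\sum_m p_m m$ is a mixing of elements of $M$, then for any finite $F$ the $\B$-set-map property of $y\mapsto\abs{x-y}$ gives $\inf_{y\in F}\abs{x-y}=\sum_m p_m\inf_{y\in F}\abs{m-y}\le\sup_{m\in M}\inf_{y\in F}\abs{m-y}$, so the approximating net for $\mix(M)$ is dominated by that for $M$. Since $E$ is a Kaplansky--Banach module, $\mix(M)$ is mix-complete. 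It then remains to verify the criterion of Lemma~\ref{rcc.l.rcc-char}. Fix $\veps>0$. Total order-boundedness gives a net $(u_\alpha)$ decreasing to $0$ with finite sets $F_\alpha$ realizing $\inf_{y\in F_\alpha}\abs{x-y}\le u_\alpha$ for all $x\in M$. Pick $\alpha$; the support of $(u_\alpha-\veps\car)_+$ shrinks as $u_\alpha\searrow 0$, but this alone does not give a \emph{single} clopen set where $u_\alpha\le\veps\car$ globally. The point is to use order-convergence to extract a countable partition: since $\inf_\alpha u_\alpha=0$ in the order-complete $\A$, one finds a sequence $\alpha_1,\alpha_2,\dots$ and disjoint clopen sets $q_n\le\Bv{u_{\alpha_n}\le\veps\car}$ with $\bigvee_n q_n=\car$. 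On $q_n$ the finite set $F_{\alpha_n}$ works, and localizing $\inf_{y\in F_{\alpha_n}}\abs{x-y}\le\veps\car$ on $q_n$ produces via the partition mechanism an element $z_n\in\mix(F_{\alpha_n})$ with $q_n\abs{x-z_n}\le\veps\car$, which is exactly the required criterion.

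For the converse \textup{(ii)}$\dann$\textup{(i)}: assume $M$ is bounded and $\mix(M)$ is relatively cyclically compact. Since $M\subseteq\mix(M)$, and since total order-boundedness passes to subsets by Lemma~\ref{tob.l.properties}(a), it suffices to show $\mix(M)$ is totally order-bounded. Fix $\veps>0$ and apply Lemma~\ref{rcc.l.rcc-char} to get a countable partition $(q_n)_n$ and finite sets $(F_n)_n$ such that each $x\in\mix(M)$ admits $z_n\in\mix(F_n)$ with $q_n\abs{x-z_n}\le\veps\car$. The task is to assemble these \emph{local} approximations into a \emph{global} finite approximating set. Here I would truncate: set $G_N:=F_1\cup\dots\cup F_N$, a finite set, and estimate $\inf_{y\in G_N}\abs{x-y}$. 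On $q_1\vee\dots\vee q_N$ one controls $x$ up to $\veps\car$ (using that each $z_n\in\mix(F_n)$, so localizing $z_n$ against the appropriate partition refinement of $F_n$ brings the infimum over the \emph{finite} set $F_n\subseteq G_N$ down to $\le\veps\car$ on $q_n$); on the tail $\bigvee_{n>N}q_n$ one only has the boundedness bound $\le c\car$. Thus $\inf_{y\in G_N}\abs{x-y}\le\veps\car + c\,(\bigvee_{n>N}q_n)$ uniformly in $x\in\mix(M)$. Since $\bigvee_{n>N}q_n\searrow 0$ as $N\to\infty$ (the $q_n$ partition unity), the right-hand side order-decreases to $\veps\car$, and letting $\veps$ range over a sequence tending to $0$ produces a net decreasing to $0$ as required by Definition~\ref{tob.d.tob}.

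The main obstacle I anticipate is the passage between a single finite witnessing set $F$ and a genuine element of $\mix(F)$, together with the careful bookkeeping of partitions of unity. In one direction one must manufacture a countable partition out of the order-convergence $u_\alpha\searrow 0$ while ensuring each piece lies below $\Bv{u_{\alpha_n}\le\veps\car}$ and the pieces exhaust $\car$; this uses order-completeness of $\A$ (extremal disconnectedness of $\Om$) in an essential way, as in Proposition~\ref{tob.p.frkhm}. In the other direction the subtlety is that Lemma~\ref{rcc.l.rcc-char} only guarantees mixings $z_n\in\mix(F_n)$, not elements of $F_n$ themselves, so to land back in the \emph{finite}-set formulation of total order-boundedness one must re-localize each $z_n$ against its own defining partition and merge everything into the single finite set $G_N$ — the truncation tail is what forces the net (rather than norm) formulation and is precisely why the equivalence is with \emph{total} rather than \emph{uniform} total order-boundedness.
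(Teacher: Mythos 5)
Your direction (ii)$\dann$(i) is correct and close to the paper's: the inequality $\inf_{y\in F_n}\abs{x-y}\le\abs{x-z_n}$ for $z_n\in\mix(F_n)$ (the $\B$-set-map/mixing argument) is the same key step, and your truncation $G_N:=F_1\cup\dots\cup F_N$ with tail estimate $c\,\bigvee_{n>N}q_n\searrow 0$ is a valid, slightly more roundabout, substitute for the paper's direct observation that $q_n\,w\le\veps\car$ for all $n$ together with $\bigvee_n q_n=\car$ already forces $w\le\veps\car$, where $w:=\inf_{F}\sup_{x\in M}\inf_{y\in F}\abs{x-y}$.

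The direction (i)$\dann$(ii), however, has a genuine gap at the countable-extraction step. You assert that from $u_\alpha\searrow 0$ one can pick indices $\alpha_1,\alpha_2,\dots$ and disjoint idempotents $q_n\le\Bv{u_{\alpha_n}\le\veps\car}$ with $\bigvee_n q_n=\car$. This is false over a general Stone algebra. Take $\A=\ell^\infty(\Omega_0)$ with $\Omega_0$ uncountable and $u_F:=\car-\car_F$, indexed by the finite subsets $F\subseteq\Omega_0$ directed by inclusion; then $u_F\searrow 0$, but for $0<\veps<1$ the largest idempotent $p$ with $p\,u_F\le\veps p$ is $\car_F$, and countably many of these have supremum $\car_{\bigcup_n F_n}\neq\car$, so no countable partition of unity can sit below countably many of the sets $\Bv{u_{\alpha_n}\le\veps\car}$. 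What your extraction needs is the countable chain condition, which general Stone algebras do not satisfy; it does hold for $\Ell{\infty}$ of a probability space (exploited in Proposition \ref{tob.p.frkhm}, via Egoroff), which is likely where the intuition comes from, but is not the setting of the present proposition. Note also that for a totally order-bounded $M$ the desired countable data $(q_n,F_n)$ do exist --- but that is essentially the assertion being proved, not a consequence of order convergence of the net alone. The paper closes exactly this hole with three ingredients absent from your argument: Lemma \ref{tob.l.properties}(h), restricting all competing finite sets to a single ball $\Ball_E[0;2r]$; the exhaustion principle, yielding a (generally uncountable) partition $(p_F)_F$ indexed by \emph{all} finite $F\subseteq\Ball_E[0;2r]$ with $p_F\sup_{x\in M}\inf_{y\in F}\abs{x-y}\le\veps\car$; and countability manufactured by grouping by cardinality, $q_n:=\bigvee\{p_F\suchthat \#F=n\}$, with the approximating finite sets $F_n$ built as mixings $y^n_j:=\sum_{\#F=n}p_F\,y^F_j$ across all sets of size $n$. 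These mixings exist only because the families involved are bounded ($E$ is merely \emph{boundedly} mix-complete), which is why the ball restriction is not a technicality but indispensable. Without this construction, your proof of (i)$\dann$(ii) does not go through.
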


\begin{proof}
(i)$\dann$(ii): Since $M$ is totally order-bounded, it is bounded.  
For each $F \subseteq E$ the mapping $x\mapsto \inf_{y \in F} \abs{x-y}$ 
is  a $\B$-set map and hence
respects mixings. Therefore
\[  \sup_{x\in M} \inf_{y\in F} \abs{x-y} = \sup_{z\in \mix(M)}
\inf_{y\in F}\abs{z-y}  
\]
for each finite $F\subseteq E$. It follows that $\mix(M)$ is totally
order-bounded.  In particular, since $E$ is boundedly mix-complete and
$\mix(M)$ is mix-closed in $E$, $\mix(M)$ is mix-complete. 
So we may suppose without loss of generality that $M$ is mix-complete.

In order to show that $M$ is relatively cyclically compact, we 
apply Lemma \ref{rcc.l.rcc-char}. By hypothesis,
\[  \inf_{F\in \Pot_\fin(E)} \sup_{x\in M} \inf_{y \in F} \abs {x- y} = 0.
\]
As $M$ is bounded we may apply Lemma \ref{tob.l.properties}(h) 
and find $r \in \R_{> 0}$ such that 
\[  \inf_{F\in \Pot_\fin(E_r)} \sup_{x\in M} \inf_{y \in F} \abs {x- y} = 0,
\]
where $E_r \coloneqq \Ball_E[0; 2r]$, for short.

Now fix $\veps \in \R_{> 0}$. For each $F\in \Pot_\fin(E_r)$ let 
$h_F := \sup_{x\in M} \inf_{y \in F} \abs {x- y} \in \A_+$. 
Define $\tilde{p}_F := \set{ h_F \le \veps}^\circ \in\B$. Then $h_F
\ge \veps$ on $\tilde{p}_F^c$, and hence $\bigvee_F \tilde{p}_F = \car$.
By disjointification (Lemma \ref{rcc.l.disjoint}) 
there is a partition of unity $(p_F)_{F\in
  \Pot_\fin(E_r)}$ such that $p_F \le \tilde{p}_F$ for each $F$, i.e.,
\[   p_F  \sup_{x\in M} \inf_{y \in F} \abs {x- y}  \le \veps \car
\qquad (F\subseteq E_r\,\, \text{finite}).
\]
Let $q_n := \bigvee \{p_F \suchthat F\subseteq E_r,\, \# F = n\}$ for $n\in \N$. Then $(q_n)_n$ is a partition of unity in $\B$. 
For fixed $n\in \N$ and $F\subseteq E_r$ with $\#F= n$ we 
write $F = \{ y_1^F, \dots, y_n^F\}$. Define the (bounded!) mixings
\[ y^n_j := \sum_{\#F=n} p_F y_j^F + \sum_{\#G\neq n} p_G \, 0\quad (j=1, \dots, n).
\]
Now, let $F_n := \{ y_1^n ,\dots, y_n^n\}$. Then if 
$F\subseteq E_r$ with
$\#F= n$ 
one has, for $x\in M$, 
\[ p_F \inf_{y \in F_n} \abs{x-y} = p_F \inf_{j=1, \dots, n} \abs{x - y_j^n} = p_F \inf_{j=1,\dots, n}\abs{x- y_j^F} = p_F \inf_{y \in F}\abs{x-y} \le \veps\car. 
\]
This yields $q_n \inf_{y \in F_n} \abs{x-y} \le \veps\car$. But
from here it is easy to find $z_n \in \mix(F_n)$ with  
$q_n \abs{x- z} \le \veps \car$.

\prfnoi
(ii)$\dann$(i): It suffices to show that $\mix(M)$ is totally order-bounded.
As $M$ is bounded, so is $\mix(M)$. As in the proof of
the implication (i)$\dann$(ii) we conclude that $\mix(M)$ is
mix-complete. Hence as above we may suppose without loss of generality
that $M$ is mix-complete, and apply Lemma  \ref{rcc.l.rcc-char}.

Fix $\veps \in \R_{>0}$ and pick
$(q_n)_n$ and $(F_n)_n$ as in Lemma \ref{rcc.l.rcc-char}.
Now fix  $n \in \N$ and
write $F_n = \{ y_1, \dots, y_d\}$. For each given 
$x\in M$ we then find
a partition of unity $(p_j)_{j=1}^d$ such that the mixing
$z_n := \sum_{j=1}^d p_j y_j$ satisfies $q_n \abs{x - z_n}\le
\veps\car$. The mapping $z \mapsto \abs{x - z}$ is a $\B$-set map, hence it follows that 
\[   q_n \inf_{y \in F_n} \abs{x-y} \le  \sum_{j=1}^d p_jq_n \abs{x-
  y_j}= q_n \abs{x- z_n} \le \veps\car.
\]
This implies, since $M$ is bounded,  $q_n \sup_{x\in M} \inf_{y \in F_n} \abs {x- y} 
\le \veps\car$ and hence 
\[ q_n \inf_{F\in \Pot_\fin(E)} \sup_{x\in M} \inf_{y \in F} \abs {x- y} \le \veps\car \qquad \text{for each $n \in \N$.}
\]
Since $\bigvee_n q_n = \car$, we obtain 
\[  \inf_{F\in \Pot_\fin(E)} \sup_{x\in M} \inf_{y \in F_n} \abs {x- y} \le \veps\car,
\]
and this implies (i) as $\veps > 0$ was arbitrary.
\end{proof}

\appendix

\section{Background}\label{s.app}

In this appendix we collect some basic notions from the theory of
lattice-normed spaces, see 
\cite[Sec.\,1.2]{EHK2024}, in particular 
\cite[Def.\,1.5 and Rem.\,1.6.(1)]{EHK2024}.

\medskip

\noindent
Let $\A$ be a commutative unital $C^*$-algebra. 
A {\emdf lattice-normed space} over $\A$ is a $\C$-vector space $E$ together with a
 mapping  $\abs{\cdot}\colon E \to \A_+$ with the following properties:
\[  \abs{x}=0 \gdw x=0,\quad \abs{\lambda x} = \abs{\lambda} \abs{x},\quad \abs{x+y}\le \abs{x} + \abs{y} \qquad  (x,y\in E,\, \lambda\in \C).
\]
Each lattice-normed space carries a
natural norm given by $\norm{x}_E := \norm{ \abs{x} }_\A$ for $x\in
E$. 

The algebra  $\A$ itself is a lattice-normed space over itself,
with
$\abs{\cdot}$ being the usual modulus mapping. The induced norm is the
natural one. 

A lattice-normed space over $\A = \C\car$
is nothing but an ordinary normed space.  

\medskip

A {\emdf lattice-normed module} is a lattice-normed space together
with a bilinear mapping 
\[ \A \times E \to E, \qquad (\lambda, x) \mapsto \lambda x
\]
that turns $E$ into an $\A$-module such that 
\[  \abs{ \lambda f} = \abs{\lambda} \abs{f}\qquad (\lambda \in \A,\,
f\in E).
\]

A {\emdf pre-Hilbert lattice-normed module} 
is a lattice-normed module $E$ together  with 
an $\A$-sesquilinear mapping
\[  E\times E \to \A, \qquad (x,y) \mapsto \sprod{x}{y}
\]
satisfying $\sprod{x}{x} = \abs{x}^2$ for all $x\in E$.

\medskip

A subset $M\subseteq E$ is {\emdf order-bounded} if
there is $f\in \A_+$ such that $\abs{x}\le f$ for all $x\in
M$. Clearly, $M$ is order-bounded iff it is norm-bounded.

\medskip

A net $(u_i)_i$ in $\A_+$ {\emdf decreases to $0$} (symbolically: $u_i
\searrow 0$) 
if $(u_i)_i$ is decreasing with $\inf_i u_i = 0$.

A net
$(x_\alpha)_\alpha$ is {\emdf order-convergent} to $x\in E$, if 
there is a net $(u_i)_i$ in $\A_+$ decreasing to $0$ and with the
property 
\[  \forall\, i \,\,\exists \alpha_i \,\, \forall\, \alpha \ge \alpha_i:\,  \abs{x_\alpha - x}\le u_i.
\]
In this case, $x$ is the {\emdf order-limit} of $(x_\alpha)_\alpha$
and as such is uniquely determined. 

If a net $(x_\alpha)_\alpha$ converges to some $x\in E$,
then it order-converges to $x$. 

\medskip
A mapping $f\colon E \to F$ between lattice-normed spaces $E$ and $F$ is
{\emdf order-continuous} if it preserves order-convergence, i.e.:
whenever some (equiv: some bounded) net $(x_\alpha)_\alpha$ order-converges to some $x$ in $E$, then
$Tx_\alpha$ order-converges to $Tx$. 

The vector space operations and the modulus
mapping $\abs{\cdot}\colon E \to \A$ are order-continuous. 

An order-continuous linear mapping is
norm-continuous, and hence maps bounded sets to bounded sets.

\medskip

A subset $M \subseteq E$ is {\emdf order-closed} if it contains the 
order-limit of each (equivalently: each bounded) net in $M$ which is order-convergent (in $E$). 
The {\emdf order-closure} $\ocl(M)$ is the smallest order-closed
subset of $E$ containing $M$, i.e., 
\[ \ocl(M) = \bigcap \{ N \subseteq E \suchthat M \subseteq N,\,N\, 
\text{order-closed}\}.
\]
One can show that
\begin{equation}\label{app.eq.ocl}
  \ocl(M) \subseteq \{ x\in E \suchthat  \inf_{z\in M} \abs{x-z} =0 \}
\end{equation}
as the right-hand side set is order-closed, see \cite[proof of Lemma
1.13]{EHK2024}. 

\begin{lemma}\label{app.l.ocl-prod}
Let $E,F$ be lattice-normed spaces over $\A$ and let $M \subset E$ and
$N \subseteq F$ be subsets. Then 
\[  \ocl(M \times N) = \ocl(M) \times \ocl(N)
\]
in the lattice-normed space $E\times F$,  with  $\abs{(x,y)}_1 := \abs{x}
+ \abs{y}$ for all $x\in E$ and $y\in F$. 
\end{lemma}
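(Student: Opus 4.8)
The plan is to prove the two set inclusions separately, relying on a single elementary observation about the product lattice-norm $\abs{(x,y)}_1 = \abs{x} + \abs{y}$: since $\abs{x} \le \abs{(x,y)}_1$ and $\abs{y} \le \abs{(x,y)}_1$ in $\A_+$, any net that order-converges in $(E\times F, \abs{\cdot}_1)$ order-converges coordinatewise, with the \emph{same} dominating net $(u_i)\searrow 0$. I would first record two consequences of this. Firstly, if $A\subseteq E$ and $B\subseteq F$ are order-closed, then $A\times B$ is order-closed in $E\times F$: a bounded net $(x_\alpha,y_\alpha)$ in $A\times B$ order-converging to $(x,y)$ forces $x_\alpha\to x$ and $y_\alpha\to y$ in order, whence $x\in A$ and $y\in B$. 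Secondly, for fixed $x_0\in E$ the partial injection $\iota_{x_0}\colon F\to E\times F$, $y\mapsto(x_0,y)$, is order-continuous, because from $\abs{\iota_{x_0}(y_\alpha)-\iota_{x_0}(y)}_1 = \abs{y_\alpha-y}$ the dominating net witnessing $y_\alpha\to y$ works verbatim; symmetrically $\jmath_{y_0}\colon x\mapsto(x,y_0)$ is order-continuous.

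For the inclusion $\ocl(M\times N)\subseteq \ocl(M)\times\ocl(N)$, I would observe that $\ocl(M)\times\ocl(N)$ is order-closed by the first consequence above and clearly contains $M\times N$; since $\ocl(M\times N)$ is by definition the smallest order-closed set containing $M\times N$, the inclusion follows immediately.

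For the reverse inclusion I would argue by slicing, using that the preimage of an order-closed set under an order-continuous map is order-closed (immediate from the definition of order-continuity: pull a convergent net back through the map). Fix $x_0\in M$. Then $\iota_{x_0}^{-1}\bigl(\ocl(M\times N)\bigr)=\{y\in F : (x_0,y)\in\ocl(M\times N)\}$ is order-closed and contains $N$, hence contains $\ocl(N)$; thus $\{x_0\}\times\ocl(N)\subseteq\ocl(M\times N)$, and as $x_0\in M$ was arbitrary, $M\times\ocl(N)\subseteq\ocl(M\times N)$. Now repeat in the other coordinate: for fixed $y_0\in\ocl(N)$ the set $\jmath_{y_0}^{-1}\bigl(\ocl(M\times N)\bigr)$ is order-closed and, by the step just completed, contains $M$, hence contains $\ocl(M)$. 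This gives $\ocl(M)\times\{y_0\}\subseteq\ocl(M\times N)$, and letting $y_0$ range over $\ocl(N)$ yields $\ocl(M)\times\ocl(N)\subseteq\ocl(M\times N)$, completing the proof.

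The argument uses no deep input and no transfinite iteration, since at each stage I invoke only the minimality of the order-closure against a single explicitly order-closed set. The only points demanding care are the order-continuity of the partial injections $\iota_{x_0}$ and $\jmath_{y_0}$ and the stability of order-closedness under preimages; both hinge solely on the explicit form $\abs{(x,y)}_1=\abs{x}+\abs{y}$ of the product lattice-norm. In particular, no combination of two separate dominating nets into one is required, so there is no genuine obstacle beyond this bookkeeping.
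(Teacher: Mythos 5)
Your proof is correct and follows essentially the same route as the paper's: both establish $\ocl(M\times N)\subseteq\ocl(M)\times\ocl(N)$ by noting that order-convergence in $(E\times F,\abs{\cdot}_1)$ is coordinatewise (so the product of order-closed sets is order-closed), and both obtain the reverse inclusion by the same two-stage slicing argument, which the paper compresses into the phrase ``by symmetry'' and you spell out via the order-continuous partial injections $\iota_{x_0}$ and $\jmath_{y_0}$.
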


\begin{proof}
It is clear that $E\times F$ is lattice-normed over $\A$ with respect
to $\abs{\cdot}_1$. A net $((x_\alpha, y_\alpha))_\alpha$ in
$E\times F$
order-converges to $(x,y)\in E\times F$ iff $x_\alpha \to x$ and
$y_\alpha \to y$ in order. Hence $\ocl(M) \times \ocl(N)$ is
order-closed. If $A\subseteq E\times F$ is order-closed
with $M \times N \subseteq A$, then for each $x\in M$ the 
set $\{y\in F \suchthat (x,y) \in A\}$ is order-closed and contains
$N$, and hence contains $\ocl(N)$. This yields $M \times \ocl(N)
\subseteq Q$. By symmetry, $\ocl(M) \times \ocl(N) \subseteq A$ and
this concludes the proof.  
\end{proof}

A net $(x_\alpha)_\alpha $ is {\emdf order-Cauchy} if 
the net $(\abs{x_\alpha - x_\beta})_{(\alpha,\beta)}$ order-converges
  to $0$. A lattice-normed space $E$ is {\emdf order-complete}
if each (equivalently: each bounded) order-Cauchy net in $E$ is order-convergent.  

Each order-Cauchy net ist norm-Cauchy, and
each order-complete space is norm-complete.

\medskip
A commutative unital $C^*$-algebra $\A$ is a {\emdf
Stone algebra} if it is order-complete (as a lattice-normed space over
itself). Equivalently, $\A$ is Dedekind-complete
as a Banach lattice, see  
\cite[Sec.\,1.3]{EHK2024}. 

By Gelfand's theorem, we may
identify  $\A \cong \Ce(\Om)$, 
where $\Om$ is a compact Hausdorff space. Then
$\A$ is a Stone algebra iff $\Om$ is 
extremally disconnected.

If $E$ is a lattice-normed space over a Stone algebra $\A$, a bounded
net $(x_\alpha)_\alpha$ in $E$
order-converges to some $x\in E$ iff
there is a net $(u_\alpha)_\alpha$ (same
index set!) in $\A_+$ decreasing to $0$ with 
$\abs{x-x_\alpha}\le u_\alpha$ for all $\alpha$,
see also \cite[Lemma 1.10]{EHK2024}.

\medskip

A {\emdf Kaplansky--Hilbert module}
is an order-complete pre-Hilbert lattice-normed
module over a Stone algebra, see
\cite[Def.\,2.1]{EHK2024}.


\bigskip


\parindent 0pt
\parskip 0.5\baselineskip
\setlength{\footskip}{4ex}
\bibliographystyle{alpha}


\begin{thebibliography}{10}

\bibitem{EHK2024}
N.~Edeko, M.~Haase, and H.~Kreidler.
\newblock A decomposition theorem for unitary group representations on
  {K}aplansky-{H}ilbert modules and the {F}urstenberg-{Z}immer structure
  theorem.
\newblock {\em Analysis Mathematica}, 50:377--454, 2024.

\bibitem{EFHN}
T.~Eisner, B.~Farkas, M.~Haase, and R.~Nagel.
\newblock {\em Operator Theoretic Aspects of Ergodic Theory}.
\newblock Springer, 2015.

\bibitem{Furs1977}
H.~Furstenberg.
\newblock Ergodic behavior of diagonal measures and a theorem of
  {S}zemer\'{e}di on arithmetic progressions.
\newblock {\em J. Anal. Math.}, 31:204--256, 1977.

\bibitem{Furstenberg1981}
H.~Furstenberg.
\newblock {\em Recurrence in Ergodic Theory and Combinatorial Number Theory}.
\newblock Princeton University Press, 1981.

\bibitem{FuKa1978}
H.~Furstenberg and Y.~Katznelson.
\newblock An ergodic {S}zemer\'{e}di theorem for commuting transformations.
\newblock {\em J. Analyse Math.}, 34:275--291, 1978.

\bibitem{HaaKrepre}
M.~Haase and H.~Kreidler.
\newblock Tensor products of {K}aplansky--{H}ilbert modules and discrete spectrum
  in conditional products.
\newblock Work in progress.

\bibitem{Jam2023}
A.~Jamneshan.
\newblock An uncountable {F}urstenberg-{Z}immer structure theory.
\newblock {\em Erg. Th. Dyn. Syst.}, 43:2404--2436, 2023.

\bibitem{Kusr2000}
A.~G. Kusraev.
\newblock {\em Dominated Operators}.
\newblock Springer, 2000.

\bibitem{LuxZaa1}
W.A.J.~Luxemburg and A.C.~Zaanen.
\newblock {\em Riesz Spaces I}.
\newblock North Holland, 1971.  

\bibitem{SikorskiBA}
R.~Sikorski.
\newblock {\em Boolean Algebras}.
\newblock Third Edition. Springer, 1969.

\bibitem{Tao2009}
T.~Tao.
\newblock {\em Poincar\'{e}'s Legacies, Part I}.
\newblock American Mathematical Society, 2009.

\bibitem{Zimm1976b}
R.~Zimmer.
\newblock Ergodic actions with generalized discrete spectrum.
\newblock {\em Illinois J. Math.}, 20:555--588, 1976.

\bibitem{Zimm1976}
R.~J. Zimmer.
\newblock Extensions of ergodic group actions.
\newblock {\em Illinois J. Math.}, 20:373--409, 1976.

\end{thebibliography}

\end{document}